\date{}
\newtheorem{theorem}{Theorem}[section]
\newtheorem{lemma}[theorem]{Lemma}
\newtheorem{corollary}[theorem]{Corollary}
\newtheorem{proposition}[theorem]{Proposition}
\newtheorem{conjecture}[theorem]{Conjecture}
\theoremstyle{definition}
\begin{document}
\date{\today}
\title{Extending partial edge colorings of cartesian products of graphs}

\author{
Carl Johan Casselgren\footnote{Department of Mathematics, 
Link\"oping University, 
SE-581 83 Link\"oping, Sweden.
{\it E-mail address:} carl.johan.casselgren@liu.se.
Research supported by a grant from the Swedish Research council VR
(2017-05077).
}
\and
Fikre B. Petros\footnote{Department of Mathematics, 
Addis Ababa University,
1176 Addis Ababa, Ethiopia.
{\it E-mail address:} fikre.bogale@aau.edu.et}
\and
Samuel A. Fufa\footnote{Department of Mathematics, 
Addis Ababa University,
1176 Addis Ababa, Ethiopia.
{\it E-mail address:} samuel.asefa@aau.edu.et}
}
\maketitle

\bigskip
\noindent
{\bf Abstract.}
We consider the problem of extending partial edge colorings of cartesian
products of graphs. More specifically, we suggest the following
Evans-type conjecture: If $G$ is a 
graph where every precoloring of at most 
$k$ precolored edges can be extended to a proper $\chi'(G)$-edge coloring, 
then every precoloring of at most $k+1$ edges of $G \square K_2$ is extendable	
to a proper $(\chi'(G) +1)$-edge coloring of $G \square K_2$.
In this paper we verify that this conjecture holds for trees, complete
and complete bipartite graphs, as well as for graphs with 
small maximum degree.
We also prove versions of the conjecture for general regular graphs
where the precolored edges are required to be independent.

\bigskip

\noindent
\small{\emph{Keywords: Precoloring extension; Edge coloring, 
Cartesian product; List coloring}}

\section{Introduction}
	An {\em edge precoloring} (or {\em partial edge coloring})
	of a graph $G$ is a proper edge coloring of some
	subset $E' \subseteq E(G)$; {\em a $t$-edge precoloring}
	is such a coloring with $t$ colors.
	An edge $t$-precoloring $\varphi$ is
	{\em extendable} if there is a proper $t$-edge coloring $f$
	such that $f(e) = \varphi(e)$ for any edge $e$ that is colored
	under $\varphi$; $f$ is called an {\em extension}
	of $\varphi$.
	In general, the problem of extending a given edge precoloring
	is an $\mathcal{NP}$-complete problem,
	already for $3$-regular bipartite graphs \cite{Fiala}.
	
	Edge precoloring extension problems seem to have been 
	first considered in connection with the problem of completing partial
	Latin squares and the well-known Evans' conjecture 
	that every $n \times n$ partial Latin square with at most $n-1$ non-empty
	cells is completable to a partial Latin square \cite{Evans}. By a well-known 
	correspondence, the problem of completing
	a partial Latin square is equivalent to asking if a partial 
	edge coloring with 
	$\Delta(G)$ colors of a balanced complete bipartite graph $G$ is extendable
	to a $\Delta(G)$-edge coloring, where $\Delta(G)$ as usual denotes the maximum degree.
	 Evans' conjecture was proved for large $n$ by H\"aggkvist \cite{Haggkvist78},
	and in full generality by Andersen and Hilton \cite{AndersenHilton}, and, 
	independently, by
	Smetaniuk \cite{Smetaniuk}.
	
	Another early reference on
	edge precoloring extension is \cite{MarcotteSeymour}, where the authors 
	study the problem from the viewpoint of polyhedral combinatorics.
	More recently, the problem of extending a precoloring of
	a matching has been considered
	in \cite{EGHKPS}.
	In particular, it is conjectured that for every graph $G$,
	if $\varphi$ is an edge precoloring of a matching $M$ in $G$
	using $\Delta(G)+1$ colors,
	and any two edges in $M$ 
	are at distance at least $2$ from each other,  then $\varphi$ 
	can be extended to a proper $(\Delta(G)+1)$-edge coloring of $G$;
	here,
	by the {\em distance} between two
	edges $e$ and $e'$ we mean the number of edges in a 
	shortest path between an endpoint of $e$ and an endpoint of $e'$; 
	a {\em distance-$t$ matching} is a matching where any
	two edges are at distance at least $t$ from each other.
	In \cite{EGHKPS}, it is proved that this conjecture holds
	for e.g. bipartite multigraphs and subcubic multigraphs, and
	in \cite{GiraoKang} it is proved that a version of the conjecture with the
	distance increased to 9 holds for general graphs.

	Quite recently, with motivation
	from results on completing partial Latin squares,
	questions on extending partial edge colorings
	of $d$-dimensional hypercubes $Q_d$ were studied 
	in \cite{CasselgrenMarkstromPham}. 
	Among other things, a characterization of
	partial edge colorings with at most $d$ precolored edges are extendable
	to $d$-edge colorings of $Q_d$ is obtained, thereby establishing an analogue
	for hypercubes of the characterization by
	Andersen and Hilton \cite{AndersenHilton} of 
	$n \times n$ partial Latin 
	squares with at most $n$ non-empty cells that are completable to Latin squares.
	In particular, every partial $d$-edge coloring with at most $d-1$ colored edges
	is extendable to a $d$-edge coloring of $Q_d$.
	This line of investigation was continued in 
	\cite{CasselgrenPetros, CasselgrenPetros2} where similar questions are
	investigated for trees.
	
	Denote by $G \square H$ the cartesian product of the graphs $G$ and $H$.
	Motivated by the result on hypercubes \cite{CasselgrenMarkstromPham}, which
	are iterated cartesian products of $K_2$ with itself,
	in this paper we continue the investigation of edge precoloring extension
	of graphs with a particular focus on Evans-type questions for
	cartesian products of graphs.
	We are particularly interested in the following conjecture, which would be 
	a far-reaching generalization of a main result of \cite{CasselgrenMarkstromPham}.

	\begin{conjecture}
\label{conj:general}
	If $G$ is a graph where every precoloring of at most $k$
	edges can be extended to a proper $\chi'(G)$-edge coloring,
	then every precoloring of at most $k+1$ edges of $G \square K_2$ is extendable
	to a proper $(\chi'(G) +1)$-edge coloring of $G \square K_2$.
\end{conjecture}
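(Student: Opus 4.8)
The plan is to set $t=\chi'(G)$, write $H=G\square K_2$ with its two canonical copies $G_1,G_2$ of $G$ and the joining matching $M=\{v_1v_2:v\in V(G)\}$, and reduce an arbitrary precoloring to a single clean case. First note that a proper $(t+1)$-edge coloring of $H$ exists: if $G$ is Class $1$, color $G_1$ and $G_2$ by a common optimal coloring of $G$ and all of $M$ with the new color $t+1$; and if $G$ is Class $2$ then $t+1=\Delta(G)+2\ge\chi'(H)$ by Vizing's theorem. Now let $\varphi$ be a partial edge coloring of $H$ using colors in $\{1,\dots,t+1\}$ with at most $k+1$ precolored edges, and split these as $A_1\subseteq E(G_1)$, $A_2\subseteq E(G_2)$ and $A_M\subseteq M$.

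The clean case is: $A_M=\emptyset$, no edge of $A_1\cup A_2$ is colored $t+1$, and $|A_1|,|A_2|\le k$. Here I would color every rung of $M$ with $t+1$; the two layers then decouple completely, since once each $G_i$ is colored with colors from $\{1,\dots,t\}$ only, the color $t+1$ is automatically free at both ends of every rung. Applying the hypothesis separately to $G_1$ and to $G_2$ extends $\varphi|_{A_1}$ and $\varphi|_{A_2}$ to proper $t$-edge colorings $f_1$ and $f_2$, and $f_1\cup f_2$ together with color $t+1$ on $M$ is the required extension of $\varphi$. Everything else is bookkeeping aimed at reaching this case, and the key invariant to maintain is that at most one of the two layers is ever presented with more than $k$ precolored edges --- which is possible only because the global budget is $k+1$.

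Three reductions are needed. \emph{(i) An overflow layer:} if $|A_1|=k+1$ (so $A_2=A_M=\emptyset$), delete one precolored edge $e=u_1v_1$ of prescribed color $c$, apply the hypothesis to obtain a proper $t$-edge coloring $f_1$ of $G_1$ extending the remaining $k$; if $f_1(e)=c$ we finish as in the clean case, and otherwise we repair $e$ by interchanging colors along the Kempe chain through $e$ in the colors $c$ and $f_1(e)$ --- crucially, after $M$ has been colored $t+1$ this chain lies entirely inside $G_1$, so it is a chain of the plain graph $G$. \emph{(ii) A precolored rung} $v_1v_2$ of color $c\le t$: this forbids $c$ at $v_1$ in $G_1$ and at $v_2$ in $G_2$, and when $\deg_G(v)=\Delta(G)$ and $G$ is Class $1$ it in fact forces color $t+1$ to appear among the $G_1$-edges at $v_1$ rather than on a rung; I would absorb such a rung by a local modification near $v$ (coloring one incident layer-edge with $t+1$ and demoting the adjacent rungs to colors $\le t$, or deleting and re-routing a pendant edge) and then apply the hypothesis to the slightly perturbed layer. \emph{(iii) A layer-edge precolored $t+1$}, say $e\in E(G_1)$: put $e$ and its mirror $e'\in E(G_2)$ both into the color class of $t+1$, color the at most four rungs meeting $e\cup e'$ with colors $\le t$, and invoke the hypothesis on $G_1-e$ and $G_2-e'$ with the induced (still small) precolorings.

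The hard part is making reductions (i)--(iii) simultaneously legitimate, and here two difficulties look essential rather than cosmetic. First, the Kempe chain in (i) may run through one of the other $k$ precolored edges of $A_1$ and spoil its color; avoiding this, or choosing a better edge to delete, cannot be arranged by counting alone and appears to need structural information about $G$. Second, reductions (ii) and (iii) replace $G$ by $G$ with a bounded matching deleted, or with altered degree constraints, and the hypothesis says nothing about such perturbations; one needs a robustness principle of the shape ``the extension property for $G$ is inherited by $G$ minus a few edges, or by $G$ with a few pendant edges attached''. Establishing such a principle is precisely what must be done by hand for trees, for complete and complete bipartite graphs, and for graphs of small maximum degree, with no uniform argument known --- which is exactly why the statement is posed as a conjecture rather than a theorem.
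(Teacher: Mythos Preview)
The statement you are addressing is Conjecture~1.1, and the paper does not prove it: it is explicitly left open, with only the special cases of trees, complete and complete bipartite graphs, graphs of maximum degree at most three, and (for independent precolored edges) regular graphs established. So there is no general proof in the paper against which to measure your attempt.

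Your write-up is also not a proof, and you say so yourself in the final paragraph. What you have produced is a correct identification of the natural strategy --- decompose $G\square K_2$ into the two layers and the rung-matching $M$, reserve the new color for $M$, and reduce to the layerwise hypothesis --- together with an honest list of the obstructions. Your clean case is exactly the easy case in each of the paper's special-case proofs (their ``Case~1: no precolored edges in $M$'' together with both layers carrying at most $k$ precolored edges).

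Two remarks on your reductions. For (i), the Kempe-chain repair is indeed unsafe for the reason you give, and the paper never uses Kempe chains here; when all $k+1$ precolored edges lie in one layer it instead exploits structure: for trees it applies the tree precoloring corollary with $\Delta+1$ colors directly, and for $K_{n,n}$ it embeds the overfull layer into $K_{n+1,n+1}$ and invokes the Andersen--Hilton theorem there. For (ii), the paper does not delete edges from $G$ and appeal to a robustness principle; rather, it selects auxiliary matchings $M_1\subseteq E(G_1)$ and $M_2\subseteq E(G_2)$ of uncolored edges covering the precolored rungs, temporarily colors each $M_i$-edge with the color of its adjacent precolored rung to manufacture layerwise precolorings of size at most $k$, extends those inside each $G_i$ via the hypothesis, and then recolors $M_1\cup M_2$ with the extra color $t+1$. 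The existence of such matchings with the required non-conflict properties is exactly where the degree and structural hypotheses enter (triangle-freeness, bipartiteness, regularity, bounded degree), and is where a uniform argument is currently missing. Reduction (iii) is not needed in the paper's treatment at all: since there are at most $k+1\le\chi'(G)$ precolored edges, one relabels colors at the outset so that $t+1$ never occurs among them.
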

	
As we shall see, Conjecture \ref{conj:general} becomes false
if we replace $(\chi'(G) +1)$ with $\chi'(G \square K_2)$ by the
example of odd cycles.
\bigskip

It it straightforward that if every  edge precoloring with $\Delta(G)$
colored edges of a connected Class 1 graph $G$ is extendable to a $\Delta(G)$-edge coloring,
then $G$ is isomorphic to a star $K_{1,n}$. If $G$, on the other hand,
is a connected Class 2 graph where any  edge precoloring with 
$\Delta(G) +1$ colored
edges is extendable, then $G$ is an odd cycle.
Therefore, 
we shall generally only consider
precolorings with at most $\chi'(G)-1$ colored edges.

In fact, it is easy to show that odd cycles and stars are 
the only connected
graphs with the property that any partial $\chi'(G)$-edge coloring is
extendable.

\begin{proposition}
\label{prop:alwaysextend}
	Every partial $\chi'(G)$-edge coloring of $G$ is extendable if and only
	$G$ is isomorphic to a star $K_{1,n}$ or and odd cycle.
\end{proposition}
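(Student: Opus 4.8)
Throughout, take $G$ to be connected (as in the discussion preceding the proposition; the ``only if'' part fails for disconnected $G$, as shown e.g.\ by $P_4 \sqcup K_{1,4}$). I would treat the two implications separately, organizing the reverse implication by $\Delta = \Delta(G)$.

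For the forward implication: if $G = K_{1,n}$ then $\chi'(G) = n$, all edges of $G$ are pairwise adjacent, so any partial proper $n$-edge coloring uses distinct colors on its precolored edges and extends by distributing the unused colors bijectively over the uncolored edges. If $G$ is an odd cycle then $\chi'(G) = 3$; given a partial proper $3$-edge coloring $\varphi$, the precolored edges cut the cycle into arcs of consecutive uncolored edges, each arc a path whose first edge is forbidden at most one color (that of the preceding precolored edge) and whose last edge likewise. I would color each arc independently, proceeding from one end and always choosing a color distinct from the previous edge; a short check shows a legal color always survives for the last edge of an arc (two choices if the penultimate edge carries the forbidden color, one otherwise). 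Oddness enters here only through $\chi'(G) = 3$: with two colors an arc consisting of a single edge between two differently precolored edges has no legal color, which is precisely why even cycles fail.

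For the reverse implication, assume every partial $\chi'(G)$-edge coloring of $G$ extends. If $\Delta \le 1$ then $G \in \{K_{1,0}, K_{1,1}\}$. If $\Delta = 2$ then $G$ is a path or a cycle; $P_2 = K_{1,1}$ and $P_3 = K_{1,2}$ are stars and odd cycles are permitted, so the remaining possibilities are $G = P_n$ with $n \ge 4$ or $G$ an even cycle, for which $\chi'(G) = 2$ and $G$ contains a path $a - b - c - d$ on four distinct vertices; as $ab$ and $cd$ are vertex-disjoint, precoloring $ab$ with $1$ and $cd$ with $2$ is a proper partial coloring that leaves no color for $bc$, a contradiction. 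The main case is $\Delta \ge 3$, where I claim $G$ must be a star. If not, I would exhibit a non-extendable partial $\chi'(G)$-edge coloring by choosing an edge $bc$ with $\deg(b) + \deg(c) - 2 \ge \chi'(G)$ and properly coloring all $\deg(b) + \deg(c) - 2$ edges incident with $\{b, c\}$ other than $bc$ so that every one of the $\chi'(G)$ colors appears; then $bc$ is adjacent to an edge of each color and cannot be colored. To locate such an edge: if some edge satisfies $\deg(b) + \deg(c) \ge \Delta + 3$ we are done, since $\chi'(G) \le \Delta + 1$ by Vizing's theorem; otherwise every edge has degree-sum at most $\Delta + 2$, which for $\Delta \ge 3$ forces every neighbor of a maximum-degree vertex to have degree at most $2$, so the maximum-degree vertices form an independent set and $G$ is Class 1 by Fournier's theorem (a graph whose maximum-degree vertices form an independent set satisfies $\chi' = \Delta$); then, as $G$ is connected and not a star, a maximum-degree vertex $b$ has a neighbor $c$ of degree at least $2$, giving $\deg(b) + \deg(c) \ge \Delta + 2 = \chi'(G) + 2$.

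The step that needs genuine care---and the main obstacle---is checking that the $\deg(b) + \deg(c) - 2$ edges around $bc$ can be properly colored with all $\chi'(G)$ colors present, subject to the constraints that edges at $b$ are pairwise distinct, edges at $c$ are pairwise distinct, and $bz$, $cz$ differ for each common neighbor $z$ of $b$ and $c$. I would color the $\deg(b) - 1$ edges at $b$ with the colors $1, \dots, \deg(b) - 1$, indexing so that the edge to the $i$-th common neighbor gets color $i$, and then on the $\deg(c) - 1 \ge \chi'(G) - \deg(b) + 1$ edges at $c$ place the still-unused colors $\deg(b), \dots, \chi'(G)$ (which all exceed $\deg(b) - 1$ and so cannot clash with the matched edge at $b$) and color any remaining edges at $c$ from $\{1, \dots, \deg(b) - 1\}$ respecting their at most one forbidden color each, which goes through by a routine Hall-type argument using the freedom in which edge at $c$ gets which color (in the case $\deg(b) = 2$ all edges at $c$ receive colors among $\{2, \dots, \chi'(G)\}$, so no clash with the color $1$ on the single edge at $b$ arises). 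In the subcase $\chi'(G) = \Delta$ with $\deg(b) = \Delta$ one can instead simply give the $\Delta - 1$ edges at $b$ other than $bc$ the colors $2, \dots, \Delta$ and one edge at $c$ the color $1$, which is plainly proper. Assembling all cases shows that a connected graph with the stated property is a star or an odd cycle, which with the forward implication proves the proposition.
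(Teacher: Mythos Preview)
The paper states Proposition~\ref{prop:alwaysextend} without proof, calling it ``easy to show'', so there is no paper argument to compare against. Your proof is correct.

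Two remarks on presentation. First, in the forward implication for odd cycles your arc argument tacitly assumes at least one precolored edge; you should dispose of the empty precoloring separately (trivial, since $C_{2n+1}$ is $3$-edge-colorable).

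Second, in the reverse implication with $\Delta\ge 3$ you work harder than necessary. You set out to properly color \emph{all} $\deg(b)+\deg(c)-2$ edges adjacent to $bc$, and this is what forces the Hall-type argument for the leftover edges at $c$. But to block $bc$ it suffices that every colour in $\{1,\dots,\chi'(G)\}$ appear on \emph{some} edge adjacent to $bc$; you need not colour them all. Concretely: give the $\deg(b)-1$ edges at $b$ the colours $1,\dots,\deg(b)-1$, then give any $\chi'(G)-\deg(b)+1\le \deg(c)-1$ edges at $c$ the remaining colours $\deg(b),\dots,\chi'(G)$, and leave the rest of the edges at $c$ uncoloured. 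Every colour at $c$ exceeds $\deg(b)-1$ while every colour at $b$ is at most $\deg(b)-1$, so no conflict can occur at a common neighbour, and the Hall step disappears. (Your alternative treatment of the subcase $\chi'(G)=\Delta$, $\deg(b)=\Delta$ already does exactly this.)

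The appeal to Fournier's theorem to force Class~1 in the low-degree-sum case is a clean way to organise the dichotomy.
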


In this paper, 
we verify that Conjecture \ref{conj:general}
holds for trees, complete  and complete bipartite graphs.
Moreover, we
prove a version of Conjecture \ref{conj:general}
for regular triangle-free graphs where the precolored edges are required
to be independent; a version for graphs with triangles is proved as well.
Finally, we prove it for graphs with small maximum degree, namely, 
graphs with maximum degree two and Class 1 graphs with maximum degree
$3$.

\section{Cartesian products with trees, complete graphs and complete bipartite graphs}

In this section, we prove that Conjecture \ref{conj:general}
holds for trees, complete bipartite graphs and complete graphs.
In the following we say that an edge $e$ is {\em $\varphi$-colored} if
$\varphi$ is a (partial) edge coloring and $e$ is colored under $\varphi$.
A color $c$ {\em appears} at a vertex $v$ if some edge incident with $v$ is colored $c$;
otherwise $c$ is {\em missing} at $v$.

Let us first consider trees.
In \cite{CasselgrenPetros}, it was proved that if at most $\Delta(T)$ edges are properly precolored in a tree $T$, then the partial coloring $\varphi$ can be extended to a proper $\Delta(T)$-edge-coloring of $T$ unless $\varphi$ satisfies any of the following conditions:\\
 
\noindent $($C1$)$ there is an uncolored edge $uv$ in $T$ such that $u$ is incident with edges of $k < \Delta(T)$ distinct colors and $v$ is incident to $\Delta(T) - k$ edges colored with $\Delta(T) - k$ other distinct colors (so $uv$ is adjacent to edges of $\Delta(T)$ distinct colors);\\

\noindent $($C2$)$ there is a vertex $u$ of degree $\Delta(T)$ that is incident with edges of $\Delta(T) - k$ distinct colors $c_1,\ldots,c_{\Delta(T)-k}$, and $k$ vertices $v_1, \ldots, v_k$, where $1 \leq k < \Delta(T)$,
 such that for $i=1, \ldots, k$, $uv_i$ is uncolored but $v_i$ is incident with an edge colored by a fixed color $c \notin \lbrace c_1, \ldots, c_{\Delta(T)-k}\rbrace$;\\

\noindent$($C3$)$ there is a vertex $u$ of degree $\Delta(T)$ such that every edge incident with $u$ is uncolored but there is a fixed color $c$ satisfying that every edge incident with $u$ is adjacent to another edge colored $c$;\\

\noindent $($C4$)$ $\Delta(T)=2$ and there are two precolored edges using the same color if they are at even distance, and using different colors if they are at odd distance. \\

 The set of all colorings of $T$ with $\Delta(T) \geq 2$ satisfying the corresponding conditions above are denoted by $\mathcal{C}_i$ for $i=1, 2, 3, 4$ 
and we define $\mathcal{C} = \cup \mathcal{C}_i$. 
We state the result of \cite{CasselgrenPetros} as the following theorem.

\begin{theorem}
\label{thm1}
\cite{CasselgrenPetros}
\textit{Let $T$ be a forest with maximum degree $\Delta(T)$. If $\varphi$ is  a
 $\Delta(T)$-edge precoloring of $T$ with at most $\Delta(T)$ precolored edges and $\varphi \notin \mathcal{C} $, then 
$\varphi$ is extendable to a proper $\Delta(T)$-edge coloring of $T$.}
\end{theorem}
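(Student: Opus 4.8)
The plan is to argue by induction on $|V(T)|$, carrying the number of colours as a free parameter: I would prove that if $T$ is a forest, $t \ge \max\{\Delta(T),3\}$, and $\varphi$ is a partial $t$-edge precolouring of $T$ with at most $t$ precoloured edges lying in none of the ($t$-colour analogues of the) families $\mathcal{C}_1$--$\mathcal{C}_4$, then $\varphi$ extends. This is the right statement to induct on, because deleting a vertex preserves $\Delta(T) \le t$ while possibly decreasing $\Delta(T)$, and when $t > \Delta(T)$ only a $\mathcal{C}_1$-type configuration can occur ($\mathcal{C}_2,\mathcal{C}_3$ require a vertex of degree $t$, and $\mathcal{C}_4$ requires $\Delta = 2$). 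The case $\Delta(T) \le 2$ is separate and elementary: $T$ is a union of paths and a parity count shows that the only non-extendable precolourings with $\Delta(T)=2$ colours are the members of $\mathcal{C}_4$. We may also assume $T$ connected, since components are coloured independently (a component with no precoloured edge is Class $1$, and the at most $t$ precoloured edges are distributed over the components).

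For the inductive step let $\ell$ be a leaf with neighbour $u$ and pendant edge $e = u\ell$. If $e$ is uncoloured we delete $\ell$: no colour changes, so a $\mathcal{C}_1$-configuration in $T-\ell$ would already be one in $T$, while the only degree that drops is that of $u$, so no new $\mathcal{C}_2/\mathcal{C}_3$-configuration appears; by the induction hypothesis $\varphi$ extends to $T-\ell$, and since $\deg_{T-\ell}(u) \le t-1$ some colour is free at $u$ for $e$. So we may assume every pendant edge is precoloured. A leaf count then forces structure: a tree has $2+\sum_{\deg v \ge 3}(\deg v - 2)$ leaves, so if $\Delta(T)=t$ the unique vertex of degree $t$ already forces $\ge t$ leaves; hence $T$ has exactly $t$ leaves, it is a \emph{spider} with centre $w$ of degree $t$, and the $t$ precoloured edges are precisely its end-edges. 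If instead $\Delta(T)<t$ we are in an ``extra colours'' regime: the uncoloured edges get lists of size $t-|A_u|-|A_v|$ (where $A_x$ is the set of colours at $x$), which, using $\Delta(T)\le t-1$ and the bound of $t$ precoloured edges, and after disposing by hand of a small ``congested'' part, is large enough to invoke the standard fact that a forest is $L$-edge-colourable whenever every list is at least as large as the larger endpoint degree.

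It remains to colour the spider. Choosing the colours of the $t$ edges at $w$ amounts to choosing a permutation, and each leg imposes a constraint on its own $w$-edge: a length-$1$ leg forces it to equal the leg's given end-colour, a length-$2$ leg forbids exactly that end-colour, and a leg of length $\ge 3$ imposes nothing and is always completable along its path. So the problem is a bipartite matching between legs and colours, and Hall's theorem shows it fails only in two shapes: ($i$) $t-1$ length-$1$ legs with distinct $w$-colours plus one length-$2$ leg whose end-colour is the unique missing colour — which is exactly a $\mathcal{C}_1$-configuration at the $w$-edge of that leg; or ($ii$) some (possibly no) length-$1$ legs plus $\ge 2$ length-$2$ legs all sharing the same end-colour $c$, with $c$ not among the $w$-colours — which is exactly a $\mathcal{C}_2$-configuration at $w$, or a $\mathcal{C}_3$-configuration if there are no length-$1$ legs. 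Since $\varphi \notin \mathcal{C}$, Hall's condition holds, a matching exists, and completing each leg finishes the extension.

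The main obstacles I foresee are, first, the bookkeeping needed to certify that the families $\mathcal{C}_1$--$\mathcal{C}_4$ are genuinely preserved under every reduction — the delicate point being the interaction between the degree drop at $u$ and the $\mathcal{C}_2,\mathcal{C}_3$ conditions — and second, the ``extra colours'' case $\Delta(T)<t$, where one must isolate and treat the congested region before the list-colouring argument applies. The spider endgame, and the matching of its two Hall-failure shapes with members of $\mathcal{C}$, is the conceptual core but is comparatively clean; it is precisely the sharpness of $\mathcal{C}_1$--$\mathcal{C}_4$ that makes the inductive reductions stay inside the ``good'' region in the first place.
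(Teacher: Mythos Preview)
The paper does not prove this statement: Theorem~\ref{thm1} is quoted from \cite{CasselgrenPetros} and used as a black box (immediately followed by Corollary~\ref{cor1}, which is the tool actually applied in the proof of Theorem~\ref{th:tree}). There is therefore no proof in the present paper to compare your proposal against.

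On its own merits, your outline is a reasonable strategy and the spider endgame is correct: once every pendant edge is precoloured and $\Delta(T)=t$, the leaf-count identity forces a spider, and the Hall analysis you give does recover exactly the configurations $\mathcal{C}_1$, $\mathcal{C}_2$, $\mathcal{C}_3$ (note that your case ``$t-1$ length-$1$ legs plus one length-$2$ leg'' is simultaneously a $\mathcal{C}_1$- and a $\mathcal{C}_2$-instance with $k=1$, which is consistent). The two places that need genuine work are the ones you flag yourself. First, the leaf-deletion step: you must check not only that no new $\mathcal{C}_2/\mathcal{C}_3$ appears at $u$, but also that no $\mathcal{C}_1$-instance is \emph{created} in $T-\ell$; since you remove an uncoloured edge rather than a coloured one this is indeed immediate, but it should be said. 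Second, the ``extra colours'' regime $\Delta(T)<t$ is the real gap in your write-up: here the tree need not be a spider at all (the leaf count only gives $\le t$ leaves, not a single high-degree vertex), and the list-colouring heuristic you sketch does not obviously avoid the residual $\mathcal{C}_1$-type obstruction, which can still occur when $\Delta(T)<t$. You would need either a separate structural argument for this case or a more careful choice of which leaf to delete so as to stay inside $\Delta(T)=t$ as long as possible.
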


The following is an immediate consequence of Theorem \ref{thm1}.

\begin{corollary}
\cite{CasselgrenPetros}
\textit{If $T$ is a forest with maximum degree 
$\Delta(T)$, then any partial edge coloring   with at most $\Delta(T) - 1$ precolored edges is  extendable to a proper $\Delta(T)$-edge
coloring of $T$.}
\label{cor1}
\end{corollary}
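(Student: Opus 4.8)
The plan is to deduce Corollary~\ref{cor1} directly from Theorem~\ref{thm1} by showing that \emph{every} precoloring lying in the exceptional family $\mathcal{C} = \bigcup_{i=1}^{4}\mathcal{C}_i$ must use at least $\Delta(T)$ precolored edges. Once this is established, a precoloring $\varphi$ of a forest $T$ with at most $\Delta(T)-1$ precolored edges cannot belong to $\mathcal{C}$, and since it trivially has at most $\Delta(T)$ precolored edges, Theorem~\ref{thm1} immediately yields a proper $\Delta(T)$-edge coloring extending $\varphi$.

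First I would dispose of the degenerate range $\Delta(T)\le 1$, where $T$ has no two adjacent edges: then $\Delta(T)-1\le 0$, so there is nothing precolored, and assigning a single color to every edge is a proper $\Delta(T)$-edge coloring. So assume $\Delta(T)\ge 2$, which is precisely the regime in which $\mathcal{C}$ is defined, and examine the four conditions in turn. For $(\mathrm{C}1)$ the uncolored edge $uv$ is adjacent to edges of $\Delta(T)$ distinct colors, $k$ of these colors occurring at $u$ and $\Delta(T)-k$ at $v$; since $T$ is simple and $uv$ is uncolored, the colored edges at $u$ and those at $v$ are pairwise distinct, so $\varphi$ colors at least $k+(\Delta(T)-k)=\Delta(T)$ edges. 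For $(\mathrm{C}2)$, the vertex $u$ of degree $\Delta(T)$ carries $\Delta(T)-k$ colored edges and each of the $k$ vertices $v_1,\dots,v_k$ carries an edge colored with a color $c\notin\{c_1,\dots,c_{\Delta(T)-k}\}$; as the edges $uv_i$ are uncolored, these $k$ edges are distinct from one another and from the colored edges at $u$, for a total of at least $\Delta(T)$ precolored edges. For $(\mathrm{C}4)$, $\Delta(T)=2$ and the condition explicitly requires two precolored edges, so again at least $\Delta(T)=2$ edges are colored.

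The only case needing a touch of care is $(\mathrm{C}3)$: here all $\Delta(T)$ edges at $u$ are uncolored, yet each is adjacent to some edge colored $c$. Such an edge is incident with a neighbor of $u$ but not with $u$ itself, and no single edge colored $c$ can be adjacent to two distinct edges $uw$ and $uw'$ at $u$, since that edge would have to be $ww'$, producing a triangle $uww'$ in the forest $T$. Hence there are $\Delta(T)$ pairwise distinct edges colored $c$, so $\varphi$ colors at least $\Delta(T)$ edges. In all four cases $\varphi$ uses at least $\Delta(T)$ precolored edges, which is what we wanted; I expect this acyclicity argument for $(\mathrm{C}3)$ to be the only nonroutine point, the other three cases being straightforward edge counts.
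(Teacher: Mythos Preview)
Your proposal is correct and matches the paper's approach exactly: the paper simply calls the corollary ``an immediate consequence'' of Theorem~\ref{thm1}, and your edge count for each of (C1)--(C4) is precisely the verification that makes it immediate. One small remark: in (C2) the pairwise distinctness of the $k$ edges colored $c$ at $v_1,\dots,v_k$ also uses acyclicity (two of them could coincide only as an edge $v_iv_j$, forcing a triangle $uv_iv_j$), the same observation you spell out carefully for (C3); so (C2) is not quite as routine as you suggest, but your (C3) argument covers it verbatim.
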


Using these results we shall establish Conjecture \ref{conj:general} for the case of
trees.

\begin{theorem}
\label{th:tree}
\textit{Let $T$ be a tree with maximum degree $\Delta(T)$. If $\varphi$ is an  edge precoloring of $\Delta(T)$ edges in $T \square K_2$, then $\varphi$ can be extended to a proper $(\Delta(T) + 1)$-edge coloring of $T \square K_2$.}
\end{theorem}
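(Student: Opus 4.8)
The plan is as follows. Write $V(T \square K_2) = V(T) \times \{1,2\}$, let $T^{(j)}$ ($j=1,2$) be the copy of $T$ induced on $V(T) \times \{j\}$, and let $M=\{m_v : v\in V(T)\}$ be the perfect matching joining the copies, where $m_v=(v,1)(v,2)$. The graph $T \square K_2$ is bipartite of maximum degree $\Delta(T)+1$, so by K\"onig's theorem $\chi'(T\square K_2)=\Delta(T)+1$; write $\Delta=\Delta(T)$ and assume $\Delta\geq 3$, the cases $\Delta=1$ (where $T\square K_2=C_4$) and $\Delta=2$ (where $T\square K_2$ is a ladder) being handled by short direct arguments. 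Partition the $\Delta$ precoloured edges into the set $E_M$ of precoloured matching edges and the sets $E_1,E_2\subseteq E(T)$ of precoloured edges in $T^{(1)},T^{(2)}$, so that $|E_1|+|E_2|+|E_M|=\Delta$. The idea is to spend the extra colour $\Delta+1$ on the matching: a non-precoloured matching edge will be given colour $\Delta+1$ whenever possible, which forbids $\Delta+1$ at its two endpoints and reduces each copy $T^{(j)}$ to an extension problem with the $\Delta$ colours $\{1,\dots,\Delta\}$ --- precisely the setting of Theorem \ref{thm1} and Corollary \ref{cor1}.

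Two kinds of vertices require care. If $m_v$ is precoloured with a colour $c\leq\Delta$, it forbids $c$ (not $\Delta+1$) at $(v,1)$ and $(v,2)$; I model this by attaching in each copy a pendant edge at $v$ precoloured $c$, replacing $T^{(j)}$ by a tree $\widehat T^{(j)}$ whose precoloured edges are $E_j$ together with at most $|E_M|$ such pendants. If, on the other hand, $v$ has degree $\Delta$ in $T$ and $m_v$ is not precoloured, then $(v,1)$ and $(v,2)$ are both saturated and the two copy-colourings must leave the \emph{same} colour free at $v$ for $m_v$; I arrange for this common colour to be $\Delta+1$, i.e.\ I ask that $\widehat T^{(j)}$ be coloured so as to use only $\{1,\dots,\Delta\}$ at $v$ (this is automatic whenever $\widehat T^{(j)}$ has a $\Delta$-edge-colouring, in particular whenever no precoloured matching edge meets a degree-$\Delta$ vertex). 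With these reductions, I would split into cases by the distribution of the precoloured edges. In the generic case both $\widehat T^{(1)}$ and $\widehat T^{(2)}$ carry at most $\Delta-1$ precoloured edges --- this happens exactly when each copy contains a precoloured edge, since $|E_j|+|E_M|=\Delta-|E_{3-j}|$ --- and Corollary \ref{cor1} completes each of them, after which the remaining matching edges get colour $\Delta+1$. Otherwise all precoloured edges are concentrated in one copy (or on the matching), some $\widehat T^{(j)}$ carries exactly $\Delta$ precoloured edges, and Theorem \ref{thm1} completes it --- unless the induced precolouring lies in the exceptional family $\mathcal C$.

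The exceptional family is where the real difficulty lies, and I expect this to be the main obstacle. When the induced precolouring of a copy satisfies one of (C1)--(C4) there is no completion with $\{1,\dots,\Delta\}$, so colour $\Delta+1$ must genuinely be used inside that copy. In configurations (C1)--(C3) the precoloured edges are concentrated around a single uncoloured edge $uv$ seeing all $\Delta$ colours, or around a single vertex of degree $\Delta$, while the matching edges at those few vertices are flexible, since their other endpoints lie in a copy with essentially no precoloured edges. I would therefore recolour a bounded number of matching edges $m_v$ away from $\Delta+1$, choosing for each one a colour missing at its relevant endpoint, so that $\Delta+1$ becomes available at the critical vertices; deleting the critical edge then splits $T$ into strictly smaller subtrees to which Corollary \ref{cor1} and Theorem \ref{thm1} apply, once one verifies --- and this is the technical heart --- that no piece again falls into $\mathcal C$. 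Finally one reassembles, colours the remaining matching edges greedily (almost all with $\Delta+1$), and checks that no degree-$\Delta$ vertex whose matching edge was meant to carry $\Delta+1$ has been spoiled; if one has been, a short Kempe-chain argument in the relevant copy (swapping colours $c$ and $\Delta+1$ along the path through that vertex) restores the required missing colour at the cost of disturbing only one further, generic, vertex. Tracking these interactions, together with the separate low-degree verification, is where the bookkeeping resides.
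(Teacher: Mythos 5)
Your setup (two copies of $T$ plus the perfect matching $M$, pendant edges modelling precoloured matching edges, spending the extra colour $\Delta+1$ on $M$) matches the paper's, but the proposal has a genuine gap exactly where you yourself locate the ``technical heart'': the case where one copy inherits $\Delta$ precoloured edges and its induced precolouring falls into the exceptional family $\mathcal{C}$ of Theorem \ref{thm1}. Your plan --- recolour a bounded number of matching edges, split the tree at a critical edge, rerun Corollary \ref{cor1}/Theorem \ref{thm1} on the pieces, and repair with Kempe chains --- is only a programme; none of the verifications (that the pieces avoid $\mathcal{C}$, that the Kempe repairs do not cascade) is carried out, and these are precisely the claims that need proof. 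The paper sidesteps $\mathcal{C}$ entirely: when all precoloured edges lie in $E(T_1)\cup M$ it colours the forest $T_1+M$ obtained by attaching the $M$-edges to $T_1$ as pendants, which has maximum degree $\Delta+1$, so its $\Delta$ precoloured edges number at most $\Delta(T_1+M)-1$ and Corollary \ref{cor1} applies directly, with no exceptional configurations to analyse. Replacing your Theorem-\ref{thm1}-plus-exceptions route by this observation removes the obstacle.

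A second gap is the synchronisation of the two copies at a degree-$\Delta$ vertex $v$ of $T$ whose matching edge $m_v$ is precoloured. There your $\widehat T^{(j)}$ has maximum degree $\Delta+1$, so the parenthetical ``this is automatic whenever $\widehat T^{(j)}$ has a $\Delta$-edge-colouring'' does not apply, and colouring the two copies independently with $\Delta+1$ colours gives no control over which colour is left free at corresponding saturated vertices, so the uncoloured edges of $M$ at such vertices may become uncolourable. The paper resolves this by choosing matchings $M_1\subseteq E(T_1)$ and $M_2\subseteq E(T_2)$ of \emph{corresponding} uncoloured tree edges covering exactly these vertices, precolouring every remaining uncoloured $M$-edge adjacent to $M_1\cup M_2$ with one fixed colour $c$, then $\Delta$-colouring the reduced forests (each component of which has at most $\Delta-1$ precoloured edges, so Corollary \ref{cor1} again suffices and the two copies automatically agree on the now-precoloured $M$-edges), and finally putting colour $\Delta+1$ on $M_1\cup M_2$ and on the leftover matching edges. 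Some such device forcing the two copies to agree on $M$ is needed and is missing from your argument.
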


\begin{proof}
Let $M$ be a perfect matching in $T \square K_2$ such that $T \square K_2 - M$ is isomorphic to two copies $T_1$ and $T_2$ of $T$. 
Without loss of generality we assume that the precoloring of $T \square K_2$ uses colors $1, \ldots, \Delta(T)$. We shall consider the following two different cases: the case when $M$ contains precolored edges, and the case when it does not. Here and henceforth, two edges are \textit{corresponding} if their endpoints are joined by two edges of $M$. Similarly, two vertices are \textit{corresponding} if they are joined by an edge of $M$. \\

\noindent \textbf{Case 1.} \textit{No precolored edges are in $M$:}\\

If no precolored edges are in $T_2$, then since $\Delta(T)$ edges are precolored, by Corollary \ref{cor1} there is a proper $(\Delta(T) + 1)$-edge coloring of $T_1$ using colors $1, \ldots, \Delta(T) +1$ that agrees with $\varphi$. Hence, we obtain an extension of $\varphi$ by coloring every edge of $T_2$ by the color of its corresponding edge in $T_1$, and then coloring every edge of $M$ by some color not appearing on any edge incident to one of its endpoints. 

If, on the other hand, $T_1$ contains at most $\Delta(T) -1$ precolored edges, then by Corollary \ref{cor1} there is a proper edge coloring $\varphi_1$ of $T_1$ using colors $1, \ldots, \Delta(T)$ which is an extension of the restriction of $\varphi$ to $T_1$. Similarly, there is a proper edge coloring $\varphi_2$ of $T_2$ using colors $1, \ldots, \Delta(T)$ which is an extension of the restriction of $\varphi$ to $T_2$. Hence, by coloring the edges of $M$ with color $\Delta(T) +1$,
 we obtain a proper $(\Delta(T) +1)$-edge coloring of $T \square K_2$. \\

\noindent \textbf{Case 2.} \textit{$M$ contains at least one precolored edge:}\\

By slight abuse of notation, we denote by $T_i +M$ the graph obtained from $T_i$
by attaching every edge of $M$ as a pendant edge of $T_i$.

If no precolored edges are in $T_2$, then since $T_1 + M$ is a forest 
with $\Delta(T)$ precolored edges, by Corollary~\ref{cor1} there is a 
proper $(\Delta(T) +1)$-edge coloring of $T_1 + M$ that agrees with 
$\varphi$. Hence, by coloring every edge of $T_2$ by the color of its corresponding edge in $T_1$, we obtain a proper $(\Delta(T) +1)$-edge coloring of $T \square K_2$. 

Suppose now that both $T_1$ and $T_2$ contains at least one precolored edge. 
Set $G_i = T_i +  M$ and consider the restriction of $\varphi$ to $G_1$ and $G_2$,
respectively. Note that both $G_1$ and $G_2$ are forests that each contains at 
most $\Delta(T) -1$ precolored edges. 
We shall define extensions of these precolorings of $G_1$ and $G_2$, respectively,
which agrees on $M$. This yields an extension of $\varphi$.

Let $V^1_M$ be the set containing all 
vertices of degree $\Delta(T) +1$ in $G_1$ that are incident with some precolored 
edge from $M$ and $V^2_M$ be the set containing all vertices of degree $\Delta +1$ 
in $G_2$ that are incident with some precolored edge from $M$. Note that 
$v_1 \in V^1_M$ if and only if the corresponding vertex $v_2 \in V^2_M$.
We shall prove that there are matchings $M_1 \subseteq E(T_1)$ and $M_2 \subseteq E(T_2)$  such that $M_1$ covers $V^1_M$ and $e_2 \in M_2$ if and only if the corresponding edge $e_1 \in M_1$. Moreover we require that if $u_1v_1 \in M_1$ and the corresponding edge $u_2v_2 \in M_2$, then 
\begin{itemize}

\item[(i)] exactly one of $u_1$ and $v_1$ is in $V^1_M$, say $u_1$;

\item[(ii)] the components $H_1$ of $G_1 - u_1$ and $H_2$ of $G_2 - u_2$ containing $v_1$ and $v_2$, respectively, contain no precolored edges;

\item[(iii)] $M_1$ and $M_2$ do not contain any precolored edges. 
 
\end{itemize}
Since all vertices of $V^i_M$ have degree $\Delta(T)$ in $T_i$,
$T_1 \cup T_2$ contains altogether at most $\Delta(T)-1$ precolored edges,
and all precolored edges of $M$ are pairwise nonadjacent, we can indeed
construct the required 
matchings $M_1$ and $M_2$ by for all precolored edges of $M$
greedily selecting an adjacent edge of $T_1$ (similarly for $T_2$) so that
(i)-(iii) holds.

Next, consider the edges of $M$ that are not precolored but adjacent to some
edge of $M_1$.
We assign some fixed color $c$, arbitrarily chosen from 
$\{ 1, \ldots, \Delta(T) \}$, to every uncolored edge of 
$M$ that is adjacent to an edge of $M_1$ or $M_2$. Taken together
with $\varphi$ this forms a precoloring $\varphi'$ of the graph $T \square K_2$.

We denote by $M'$ the  set containing all the remaining 
uncolored edges of $M$. Since all edges of $M_i$ and $M'$  are pairwise
nonadjacent, the set $M_i'' = M_i \cup M'$ is a matching in $G_i$. 
We set $G_i'' = G_i - M_i'' $. Then $E(G''_1) \cap M = E(G''_2) \cap M$.

By construction, each of the graphs $G_1''$ and $G''_2$ have maximum
degree $\Delta(T)$, and since both $G_1$ and $G_2$ contain at most
$\Delta(T)-1$ precolored edges under $\varphi$, respectively, 
and (i)-(iii) holds,
every component of $G''_1$ and $G''_2$ contains at most 
$\Delta(T)-1$ precolored edges under the coloring $\varphi'$.
Hence, it follows from Corollary \ref{cor1}, that there is a proper 
$\Delta(T)$-edge coloring $f_i$ of $G_i''$ that agrees with the restriction of
$\varphi'$ to $G''_i$. Note that $f_1$ and $f_2$ agree on all edges of $M$
that are in $G''_1$ (and $G''_2$).

It remains to color the edges of  $M_1'' \cup M_2''.$ 
We simply assign
color $\Delta(T) +1$ to all edges of this set. 
Taken together with $f_1$ and $f_2$, this yields a proper $(\Delta(T)+1)$-edge
coloring of $T \square K_2$ that is an extension of $\varphi$, 
because $f_1$ and $f_2$ agree on $M \cap E(G''_1)$.
This completes the proof.
\end{proof}


Let us now consider complete bipartite graphs.
Recall that the edge precoloring extension problem for the balanced complete bipartite graph $K_{n, n}$ corresponds to asking whether a partial Latin square can be completed to a Latin square. As mentioned above, motivated by
Evans' conjecture \cite{Evans},
Andersen and Hilton \cite{AndersenHilton} completely characterized 
partial Latin squares of order $n$ with $n$ nonempty cells that cannot be completed to a Latin square of order $n$. In the language of edge colorings they proved the following.

\begin{theorem}
\label{th:compbip}
\cite{AndersenHilton2} Let $n \geq 2$ be a positive integer. An  edge precoloring $\varphi$ of at most $n$ edges of $K_{n, n}$ can be extended to an $n$-edge coloring of 
$K_{n, n}$ if and only if none of the following two conditions hold.
\begin{itemize}
\item[(a)] For some uncolored edge $uv$ there are $n$ colored edges of different colors, each one incident with $u$ or $v$.
\item[(b)] For some vertex $v$ and some color $c$, the color $c$ does not appear
on any edge incident to $v$, but every uncolored edge incident with $v$
is adjacent to an edge colored $c$.
\end{itemize}
\end{theorem}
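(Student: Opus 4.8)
The plan is to handle necessity directly and to derive sufficiency from the resolved Evans conjecture together with an induction on $n$.

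For \emph{necessity}, suppose $f$ is a proper $n$-edge coloring extending $\varphi$. If (a) holds for an uncolored edge $uv$, then $f(uv)$ must avoid the colours of all $n$ colored edges incident with $u$ or $v$; since these realise all $n$ colours, no colour is available for $uv$, a contradiction. If (b) holds for a vertex $v$ and a colour $c$, then since $v$ has degree $n$ in $K_{n,n}$ and $f$ is proper, there is an edge $vw$ with $f(vw)=c$; as $c$ is missing at $v$ under $\varphi$, this edge is uncolored under $\varphi$, hence by (b) it is adjacent to an edge $e$ with $\varphi(e)=c$, and $e$ is incident with $w$ (because $c$ does not appear at $v$). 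Then $f$ colours both $vw$ and $e$ with $c$ at $w$, again a contradiction. So neither (a) nor (b) can hold when $\varphi$ is extendable.

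For \emph{sufficiency}, assume neither (a) nor (b) holds. If at most $n-1$ edges are precolored, then $\varphi$ extends by the resolution of Evans' conjecture \cite{Evans, Smetaniuk} --- equivalently, every $n \times n$ partial Latin square with at most $n-1$ filled cells completes --- and since with at most $n-1$ precolored edges neither (a) nor (b) can occur, nothing is lost. It remains to treat the case of exactly $n$ precolored edges, which I would do by induction on $n$, the cases $n \le 2$ being checked directly. View $\varphi$ as a partial Latin square $L$ of order $n$ with exactly $n$ filled cells. If some symbol $s$ occurs exactly once, say at cell $(x,y)$, I would delete $s$ together with row $x$ and column $y$ to obtain a partial Latin square $L'$ of order $n-1$ on the remaining $n-1$ symbols: if $(x,y)$ is not the only filled cell in its row or in its column, then $L'$ has at most $n-2$ filled cells and completes by Evans, while if it is, then $L'$ has $n-1$ filled cells, one checks that $L'$ is again free of obstructions of type (a) and (b), and $L'$ completes by the induction hypothesis. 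In either case one then has to reinstate row $x$, column $y$ and symbol $s$, with entry $s$ at $(x,y)$ and respecting those entries of $\varphi$ lying in row $x$ or column $y$; this is the standard operation of extending an order-$(n-1)$ Latin square to order $n$ along a suitably chosen transversal --- the ``broken diagonal'' device of Smetaniuk \cite{Smetaniuk}.

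The configurations left over, in which no symbol occurs exactly once so that the precolored edges are ``concentrated'' (the bipartite demand graph on rows and columns being a star, or a union of a few paths or cycles, or the like), fall outside this contraction scheme and must be completed by a direct Latin-rectangle or distinct-representatives argument. I expect the main obstacle to lie precisely here, at the confluence of three points: verifying that the contraction preserves freedom from (a) and (b) (the step that actually uses those hypotheses); dispatching the concentrated configurations directly; and choosing the transversal in the lift-back so that the prescribed entries of the new row and column are honoured. These points form the technical heart of the Andersen--Hilton theorem; the rest is bookkeeping.
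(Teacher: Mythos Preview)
The paper does not give its own proof of this theorem: it is quoted from Andersen and Hilton and then used as a black box in the proof of Theorem~\ref{compbip}. So there is no argument in the paper to compare against, and I can only assess your sketch on its own merits. Your necessity argument is correct, and so is the reduction of sufficiency to the case of exactly $n$ precolored edges via the resolved Evans conjecture (each of (a) and (b) indeed forces at least $n$ precolored edges).

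The inductive scheme, however, breaks at two points. First, the claim ``one checks that $L'$ is again free of obstructions of type (a) and (b)'' is false. Take $n=3$ and $L$ with $(1,1)=1$, $(2,2)=2$, $(3,3)=3$: then $L$ avoids (a) and (b) and is completable, every symbol occurs exactly once, and $(3,3)$ is the unique filled cell in its row and column; yet deleting row~$3$, column~$3$ and symbol~$3$ leaves $L'$ with $(1,1)=1$, $(2,2)=2$ on symbols $\{1,2\}$, which satisfies (a) at $(1,2)$ and is not completable. By symmetry the same happens whichever symbol you delete, so the contraction does not preserve the hypothesis. Second, and more structurally, the ``reinstatement'' step is not the routine lift you suggest. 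Once $L'$ is completed to a full $(n-1)\times(n-1)$ Latin square on $\{1,\dots,n\}\setminus\{s\}$, every column already contains all $n-1$ non-$s$ symbols, so the only symbol available for any cell $(x,j)$, $j\neq y$, is $s$; hence you cannot simply append row~$x$ and column~$y$. You are forced to go back into $L'$ and overwrite one cell per row and per column with $s$, and the displaced symbols must fill row~$x$ and column~$y$ consistently and respect any prescribed entries there --- i.e.\ the particular completion of $L'$ must possess a transversal with prescribed values in prescribed positions. Nothing in your setup guarantees this (not every Latin square even has a transversal), and Smetaniuk's construction does not provide it: his proof permutes the filled cells above a diagonal and completes row by row, rather than completing a smaller square and lifting along a transversal. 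So the crux you identify is not merely delicate bookkeeping; the pieces you have assembled do not combine to give the lift-back, and the actual Andersen--Hilton proof proceeds by substantially different means.
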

 
Using this result we now verify that Conjecture \ref{conj:general}
holds for the cartesian product $K_{n,n} \square K_2$.

\begin{theorem}
\label{compbip}
\textit{Let $n \geq 2$ be a positive integer. If $\varphi$ is an  edge precoloring of $n$ edges in $K_{n, n} \square K_2$, then $\varphi$ can be extended to a proper $(n+1)$-edge coloring of $K_{n,n} \square K_2$.}
\end{theorem}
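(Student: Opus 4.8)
The plan is to mimic the proof of Theorem~\ref{th:tree}, using Theorem~\ref{th:compbip} (the Andersen--Hilton characterization) in place of Corollary~\ref{cor1}. Let $M$ be a perfect matching in $K_{n,n}\square K_2$ such that $(K_{n,n}\square K_2)-M$ consists of two copies $H_1,H_2$ of $K_{n,n}$, and assume $\varphi$ uses colors $1,\dots,n$. As before, split into the case where $M$ contains no precolored edge and the case where it does; in each case one wants to produce proper $n$-edge colorings (or $(n+1)$-edge colorings) of the two sides that agree on $M$ and then color the remaining edges of $M$ with color $n+1$.

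\medskip

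\noindent\textbf{Case 1: no precolored edge lies in $M$.} If one copy, say $H_2$, has no precolored edges, I would try to extend $\varphi|_{H_1}$ (at most $n$ precolored edges in $K_{n,n}$) to a proper $n$-edge coloring of $H_1$, copy it to $H_2$ along $M$, and color $M$ with $n+1$. The obstruction is that $\varphi|_{H_1}$ might satisfy condition (a) or (b) of Theorem~\ref{th:compbip}. In that situation I would instead use the extra color $n+1$ \emph{inside} $H_1$: since we now have $n+1$ colors available on $K_{n,n}$ and only $n$ precolored edges, I claim (a)/(b) can always be avoided — e.g.\ recolor or leave uncolored one suitably chosen precolored edge, extend with $n+1$ colors, then fix $M$. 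If instead precolored edges are split so that each of $H_1,H_2$ has at most $n-1$ precolored edges, then each restriction trivially avoids (a) and (b) (an instance of (a) needs $n$ colored edges, (b) needs the offending vertex to have all $n$ other colors around it, again forcing $\ge n-1$ colored edges plus structure), so each side extends to a proper $n$-edge coloring and we color $M$ with $n+1$.

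\medskip

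\noindent\textbf{Case 2: $M$ contains a precolored edge.} Here I would again work with $H_i+M$, the graph obtained by attaching the matching edges as pendants, but unlike a forest this graph is not covered directly by Theorem~\ref{th:compbip}. The remedy, exactly as in the tree proof, is to \emph{peel off} a small matching: for every precolored edge $e=u_1u_2\in M$ (these are pairwise nonadjacent and there are at most $n-1$ of them, along with at most $n-1$ precolored edges total across $H_1\cup H_2$), select in $H_1$ an edge $u_1v_1$ incident to $u_1$ and in $H_2$ the corresponding edge $u_2v_2$, chosen greedily so that (i) only the $M$-endpoint is a high-degree ``bad'' vertex, (ii) the components cut off contain no precolored edges, and (iii) $M_1,M_2$ contain no precolored edges; then assign a fixed color $c\in\{1,\dots,n\}$ to uncolored $M$-edges adjacent to $M_1\cup M_2$, delete $M_i'' = M_i\cup M'$ (the leftover uncolored $M$-edges) from $H_i+M$ to get graphs $G_i''$ which are copies of $K_{n,n}$ with at most one vertex of reduced degree, and apply Theorem~\ref{th:compbip} on each side. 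The one subtlety specific to $K_{n,n}$ (versus trees) is that deleting $M_i''$ no longer disconnects the graph, so ``at most $\Delta-1$ precolored edges per component'' must be replaced by a direct verification that $\varphi'|_{G_i''}$ satisfies neither (a) nor (b); this is where conditions (i)--(iii) and the bound of at most $n-1$ precolored edges are used, and checking it cleanly is the main technical point.

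\medskip

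\noindent The step I expect to be the real obstacle is precisely avoiding conditions (a) and (b) of Theorem~\ref{th:compbip} on each side after the reduction — both in Case~1 when a single copy already carries all $n$ precolored edges and one must exploit the extra color $n+1$, and in Case~2 when verifying that the pendant-and-peel construction leaves a precoloring of $K_{n,n}$ with no Andersen--Hilton obstruction. Once that is in hand, coloring the remaining matching edges with $n+1$ and checking agreement on $M$ is routine, just as in the proof of Theorem~\ref{th:tree}.
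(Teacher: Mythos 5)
Your overall setup (splitting on whether $M$ contains a precolored edge, aiming to color the leftover matching edges $n+1$) matches the paper, but at the two places you yourself flag as ``the real obstacle'' your proposal has genuine gaps, and the paper resolves them by ideas you do not supply. In Case~1, when one copy carries all $n$ precolored edges, your plan to ``recolor or leave uncolored one suitably chosen precolored edge, extend with $n+1$ colors, then fix $M$'' is not a proof: Theorem~\ref{th:compbip} is a statement about $n$-edge-colorings of $K_{n,n}$, and nothing you cite lets you extend a precoloring of $K_{n,n}$ using $n+1$ colors. The paper's actual device is to embed $K^1_{n,n}$ into a supergraph isomorphic to $K_{n+1,n+1}$; there the $n$ precolored edges are at most $(n+1)-1$, so the extension with colors $1,\dots,n+1$ is automatic, and restricting back, copying to $K^2_{n,n}$, and giving each $M$-edge the unique color missing at its endpoints finishes the case.

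In Case~2 the gap is more serious: you propose to imitate the tree proof by attaching the $M$-edges as pendants and deleting a peeled-off matching, and then to apply Theorem~\ref{th:compbip} to the resulting graphs $G_i''$. But those graphs are $K_{n,n}$ minus a matching with pendant edges attached --- they are not complete bipartite graphs, and the Andersen--Hilton characterization simply does not apply to them (unlike Corollary~\ref{cor1}, which applies to \emph{all} forests and so survives the pendant/peel surgery). The paper instead stays inside honest copies of $K_{n,n}$: it transfers the color of each precolored $M$-edge onto an adjacent uncolored edge of a carefully chosen matching $M_1\subseteq E(K^1_{n,n})$ (and correspondingly $M_2$), extends the resulting precoloring of $K^i_{n,n}$ with colors $1,\dots,n$, then recolors $M_1\cup M_2$ with color $n+1$ so that the transferred color becomes missing at the relevant endpoints and the precolored $M$-edges can keep their colors. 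Moreover, in the subcase where only one copy carries precolored edges, the transferred precoloring $\varphi_1$ can have exactly $n$ colored edges and may genuinely satisfy condition (a) or (b); the paper needs a further page of case analysis (uncoloring a color that occurs once, or recoloring the class $M_c$ of a repeated color $c$ to $n+1$) to get around this, and your proposal does not address it. So the skeleton is right but both load-bearing steps are missing.
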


 \begin{proof}
Without loss of generality we assume that the precoloring of $K_{n, n} \square K_2$ uses colors $1, \ldots, n$.
For $n=2$, the result follows from the above-mentioned result on hypercubes
\cite{CasselgrenMarkstromPham}. Suppose  $n > 2$ 
and let $M$ be a perfect matching in $K_{n, n} \square K_2$ such that $K_{n, n} \square K_2 - M$ is isomorphic to two copies $K^1_{n, n}$ and $K^2_{n, n}$ 
of $K_{n, n}$. We shall consider two different main cases. \\

\noindent \textbf{Case 1.} \textit{No precolored edges are in $M$:}\\

If there is at least $1$ and at most $n-1$ precolored edges in $K^1_{n, n}$,
then there is a proper $n$-edge coloring $\varphi_1$ of $K^1_{n, n}$ which is an extension of the restriction of $\varphi$ to $K^1_{n, n}$. Similarly, there is a proper $n$-edge coloring $\varphi_2$ of $K^2_{n, n}$ which is an extension of the restriction of $\varphi$ to $K^2_{n, n}$, so by coloring the edges of $M$ with color $n+1$, we obtain an extension of $\varphi$.

If $K^1_{n, n}$, on the other hand has exactly $n$ precolored edges,
then we consider a supergraph $G$ isomorphic to $K_{n+1, n+1}$ containing
$K^1_{n, n}$ as a subgraph (along with the precolored edges).
Since this graph contains at most $n$ precolored edges,
there is a proper $(n+1)$-edge coloring of $G$ that agrees with $\varphi$. 
We take the restriction of this coloring to $K^1_{n, n}$,
color $K^2_{n, n}$ correspondingly, and then color every uncolored edge of $M$ by 
the unique color not appearing at its endpoints. The obtained edge coloring
is an extension of $\varphi$.  \\

\noindent \textbf{Case 2.} \textit{$M$ contains at least one precolored edge:}\\

Let us first assume that both $K^1_{n, n}$ and $K^2_{n, n}$ contains at least one precolored edge,
Let $V^i_M$ be the set containing all vertices in $K^i_{n, n}$ that are incident with some precolored edge from $M$. 
As in the preceding proof, $v_1 \in V^1_M$ if and only 
if the corresponding vertex $v_2 \in V^2_M$. 

Proceeding along the lines in the  proof of Theorem \ref{th:tree},
we shall construct matchings $M_1 \subset E(K^1_{n, n})$ and
$M_2 \subset E(K^2_{n, n})$ such that each vertex of $V^1_M$ is incident
with a unique edge of $M_1$, and
$e_2 \in M_2$ if and only if the corresponding edge $e_1$
in $K^1_{n,n}$ is in $M_1$. 
Furthermore,
we shall require that no edge $e$ from $M_1 \cup M_2$ is adjacent to a precolored
edge of $K^1_{n,n}$ or $K^2_{n,n}$ of the same color as the precolored edge
of $M$ that e is adjacent to, and that no edge of $M_1 \cup M_2$ is precolored.
As above, since the vertex degree in $K^1_{n,n}$ and $K^2_{n,n}$ is $n$,
$K^1_{n,n}$ and $K^2_{n,n}$ are bipartite,
and $K_{n,n} \square K_2$ contains altogether $n$ precolored edges,
we can simply select the edges of $M_1$ (and $M_2$) greedily.

Now, from the restriction of $\varphi$ to $K^i_{n,n}$, we define
a new edge precoloring $\varphi_i$ 
by coloring every edge of $M_i$ by the color of the
adjacent edge of $M$ under $\varphi$.
Since $K^i_{n, n}$  contains at most $n-1$ precolored edges under $\varphi_i$, 
there is an extension $f_i$ of $\varphi_i$  using colors $1, \ldots, n$.  
Now, by recoloring all the edges in $M_1$ and $M_2$ by 
the color $n+1$ and coloring every uncolored edge of $M$ by the color not appearing at its endpoints, we obtain an extension of $\varphi$. \\

Suppose now that no precolored edges are in $K^1_{n, n}$ 
or $K^2_{n, n}$, say $K^2_{n, n}$. 
Then $K^1_{n, n}$ contains at most $n-1$ precolored edges. Our proof of this case is similar to the proof of the preceding case. As above, let $V^1_M$ be the set containing all vertices in $K^1_{n, n}$ that are incident with some precolored edge from $M$. 
Since vertices in $K^1_{n, n}$ have degree $n$, and $K^1_{n, n}$ is bipartite,
there is a
matching $M_1 \subseteq E(K^1_{n, n})$ covering $V^1_M$ satisfying analogous conditions
to the matchings constructed in the preceding case.

As before, from the restriction of $\varphi$ to $K^1_{n,n}$
we define a new precoloring $\varphi_1$ of $K^1_{n,n}$ by coloring
every edge of $M_1$ by the color of its adjacent edge in $M$.
Now, if $\varphi_1$ is extendable to a proper edge coloring of $K^1_{n, n}$ using colors $1, \ldots, n$, then we obtain an extension of $\varphi$ by recoloring all the edges in $M_1$ by color $n+1$, coloring $K^2_{n, n}$ correspondingly, and then coloring every uncolored edge of $M$ by the unique color 
from $\{1,\dots, n+1\}$ not appearing at its endpoints. So assume that there is no such extension of $\varphi_1$. By Theorem \ref{th:compbip}, this means
that  the coloring $\varphi_1$ satisfies condition (a) or (b)
of this theorem.

Suppose first that (a) holds. Then all colors $1,\dots,n$ appears on some
edge under $\varphi$, so every color appears on precisely one edge. Since $M$ contains at least one precolored edge, without loss of generality we assume that one edge $e_{M_1}$ in $M_1$ is colored with color $1$. Now define a new edge coloring $\varphi'_1$ from $\varphi_1$ by removing color $1$ from $e_{M_1}$
of  $K^1_{n, n}$. 
Since $K^1_{n, n}$ contains exactly $n-1$ $\varphi'_1$-colored edges,
there is a proper edge coloring of $K^1_{n, n}$ using colors $2, \ldots, n+1$ which is an extension of $\varphi'_1$. Now, by recoloring all the edges in
$M_1$ of $K^1_{n, n}$, distinct from $e_{M_1}$, by color $1$, coloring $K^2_{n, n}$ correspondingly, and then coloring every uncolored edge of $M$ by the unique color not appearing on any edge incident to one of its endpoints, we obtain an extension of $\varphi$.
 
Suppose now that (b) holds.
Then there is some color $c$ that appears on at least two edges 
under $\varphi$. 
If some color $c'$ is used once by $\varphi$ appears on some edge in the matching $M$, then we may remove the color $c'$ from some edge in $M_1$ 
that is colored $c'$, and proceed as in the preceding paragraph. If, on the other hand, all colors that are used once by $\varphi$ do not appear on edges of $M$, then color $c$ is the only color that appears on edges in $M$; so 
color $c$ is the only color that appears on edges in $M_1$.

Now, since $K^1_{n,n}$ contains at most $n-1$ $\varphi$-colored edges,
there is an extension of the restriction of $\varphi$ to $K^1_{n,n}$
using colors $1, \ldots, n$. 
Let $M_c$ be the set of all edges of $K^1_{n,n}$ that are colored $c$ and adjacent
to a precolored edge of $M$.
By recoloring all the edges of $M_c$ by $n+1$,  coloring $K^2_{n, n}$ correspondingly, and then coloring every uncolored edge of $M$ by the unique color not appearing at its endpoints, we obtain an extension of $\varphi$. 
\end{proof}

Finally, let us consider complete graphs. Again our confirmation of Conjecture 
\ref{conj:general} is based on a result
by Andersen and Hilton \cite{AndersenHilton2}.

\begin{theorem}
\label{th:complete}
\cite{AndersenHilton2} Let $n \geq 2$ be a positive integer.
	\begin{itemize}
	
	\item[(i)] If $\varphi$ is an edge precoloring of at most $n$ edges
	of $K_{2n}$, then $\varphi$ is extendable to a proper $(2n-1)$-edge coloring
	unless the precolored edges form a matching, where $n-1$ edges are colored
	by a fixed color $c$, and one edge is colored by some color $c'\neq c$.
	
	\item[(ii)] If $\varphi$ is an edge precoloring of at most $n+1$ edges
	of $K_{2n-1}$, then $\varphi$ is extendable to a proper $(2n-1)$-edge coloring
	unless the precolored edges form a set of $n-2$ independet edges colored
	by a fixed color $c$, and a triangle, disjoint from the independent edges,
	the edges of which are  colored by three different colors
	that are distinct from $c$.

\end{itemize}
\end{theorem}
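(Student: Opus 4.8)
Since Theorem~\ref{th:complete} is the edge-coloring reformulation of Andersen and Hilton's characterization of which partial (near-)one-factorizations of complete graphs with few prescribed edges can be completed, the plan is to follow their line of argument, and I only indicate its shape. The first step is to record the dictionary. Because $K_{2n}$ is $(2n-1)$-regular and Class~1, in any proper $(2n-1)$-edge coloring of it each color occurs once at each vertex, so its color classes are perfect matchings and the coloring is a one-factorization; an edge count shows that in a proper $(2n-1)$-edge coloring of $K_{2n-1}$ every color class has exactly $n-1$ edges, hence is a perfect matching of $K_{2n-1}$ missing one vertex, with the color-to-missed-vertex map a bijection, so the coloring is a near-one-factorization. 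Deleting a vertex from a one-factorization of $K_{2n}$, and conversely adjoining a vertex $x$ joined to the vertex omitted by color $i$ with an edge of color $i$, passes between the two settings; moreover a near-one-factorization of $K_{2n-1}$ on $\{1,\dots,2n-1\}$ is exactly a symmetric Latin square of order $2n-1$ (the color of edge $uv$ placed in cells $(u,v)$ and $(v,u)$, the omitted colors on the diagonal), so a precoloring of $K_{2n-1}$ corresponds to a partial symmetric Latin square with the prescribed cells off the diagonal and in mirror pairs.

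For the substance I would prove part~(ii) --- the harder one, since $K_{2n-1}$ is Class~2 --- by induction on $n$ using alternating-path (Kempe-chain) recoloring, the standard engine for Evans-type extension theorems: delete one precolored edge or a suitable vertex, check that the reduced precoloring is still not of the exceptional shape, extend it by the induction hypothesis, reinsert, and clear the at most one resulting conflict by a two-color Kempe swap along a path chosen to avoid every precolored edge, which is possible precisely because the precoloring is non-exceptional; the base cases $n=2,3$ are done by hand. Alternatively, part~(ii) follows through the dictionary from the completion theorems for partial symmetric Latin squares, or from Hilton's amalgamation and detachment machinery applied to $K_{2n-1}$ with the vertices untouched by $\varphi$ amalgamated into one. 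Part~(i) then reduces to part~(ii): if the at most $n$ precolored edges of $K_{2n}$ do not cover all $2n$ vertices, delete an uncovered vertex, extend inside $K_{2n-1}$ by~(ii), and restore the vertex by joining it to the vertex omitted by each color with an edge of that color; and if the precolored edges do cover all $2n$ vertices they form a perfect matching, and a direct inspection of the forced color classes shows completion is possible unless $n-1$ of the edges carry a common color $c$ and the last one carries a different color $c'$, which is the stated exception.

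Finally, the two exceptional families must be pinned down exactly. With at most $n$ precolored edges in $K_{2n}$ the only obstruction is that some color $c$ is forced to be the perfect matching on the vertices it does not yet touch while a precolored edge among those vertices carries a different color; this occurs precisely for a matching of $n-1$ edges colored $c$ together with one further edge colored $c'\neq c$, because the $n-1$ edges cover $2n-2$ vertices, color $c$ must also match the last two, and those two are the endpoints of the $c'$-edge. The analogous count in $K_{2n-1}$ with at most $n+1$ precolored edges forces $n-2$ independent edges colored $c$ together with a triangle colored by three colors different from $c$, since the $c$-edges cover $2n-4$ vertices, color $c$ needs exactly one more edge among the remaining three vertices, and all three edges between them are precolored with colors other than $c$; and conversely one checks that any precoloring avoiding these two shapes leaves room for the recoloring step. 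The main obstacle throughout is the bookkeeping of the inductive step for part~(ii): proving that a non-exceptional precoloring stays non-exceptional after the reduction, and that the single conflict created on reinsertion can always be removed by an alternating-path recoloring that does not touch a precolored edge --- the technical heart of the Andersen--Hilton argument, and the reason one leans on the highly symmetric one-factorizations of $K_{2n}$ and passes to the near-one-factorizations of $K_{2n-1}$ only where unavoidable.
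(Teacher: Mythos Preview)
The paper does not prove Theorem~\ref{th:complete}: it is quoted verbatim as a result of Andersen and Hilton \cite{AndersenHilton2} and used as a black box in the proofs of Lemma~\ref{cl:K5} and Theorem~\ref{prop:complete}. There is therefore no proof in the paper to compare your proposal against.

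As for the proposal itself, what you have written is a plausible high-level roadmap (the dictionary with one-factorizations and symmetric Latin squares is correct, and the reduction of part~(i) to part~(ii) by deleting an uncovered vertex is standard), but it is not a proof. The load-bearing claim --- that after reinserting the deleted edge or vertex the single resulting conflict can always be cleared by a Kempe swap that avoids every precolored edge, ``precisely because the precoloring is non-exceptional'' --- is asserted rather than argued, and this is exactly the difficult part. You yourself flag it as ``the technical heart of the Andersen--Hilton argument,'' and nothing in your sketch supplies it. Likewise, the verification that a non-exceptional precoloring remains non-exceptional under your reduction is stated as needed but not carried out. If you want a self-contained proof you would have to either reproduce the substantial case analysis of \cite{AndersenHilton2} or give a genuinely new argument; as written, your proposal is a summary of what such a proof would have to accomplish rather than an execution of it.
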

	In particular, this implies that every partial edge coloring
	of at most $n-1$ edges is extendable to a proper $(2n-1)$-edge coloring
	of $K_{2n}$, and similarly, every partial edge coloring of at most
	$n$ edges of $K_{2n-1}$ is extendable to a proper $(2n-1)$-edge coloring.
	
	The following establishes that Conjecture \ref{conj:general} holds for complete
	graphs.

\begin{theorem}
\label{prop:complete}
Let $n \geq 2$ be a positive integer.
	\begin{itemize}
	
	\item[(i)] If $\varphi$ is an edge precoloring of at most $n$ edges
	of $K_{2n} \square K_2$, then $\varphi$ is extendable to a proper $2n$-edge coloring
	of $K_{2n} \square K_2$.
	
	\item[(ii)] If $\varphi$ is an edge precoloring of at most $n+1$ edges
	of $K_{2n-1} \square K_2$, then $\varphi$ is extendable to a 
	proper $2n$-edge coloring of $K_{2n-1} \square K_2$.
	
	\end{itemize}
\end{theorem}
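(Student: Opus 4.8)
The plan is to argue exactly as in the proofs of Theorems~\ref{th:tree} and~\ref{compbip}. Write $G$ for $K_{2n}$ in part (i) and for $K_{2n-1}$ in part (ii), let $M$ be a perfect matching of $G \square K_2$ such that $(G\square K_2)-M$ consists of two copies $G_1$ and $G_2$ of $G$, and use the terms \emph{corresponding} vertices and edges as before; we may assume $\varphi$ uses colors from $\{1,\dots,2n-1\}$. Recall from Theorem~\ref{th:complete} that every precoloring of at most $n-1$ edges of $K_{2n}$, and every precoloring of at most $n$ edges of $K_{2n-1}$, extends to a proper $(2n-1)$-edge coloring. As usual we split into the case that no precolored edge lies in $M$ and the case that at least one does. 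The crux will be the two exceptional configurations of Theorem~\ref{th:complete}, which I treat last.

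In the first case, if both $G_1$ and $G_2$ carry a precolored edge then each carries at most $n-1$ of them in part (i) and at most $n$ in part (ii); extend each restriction $\varphi|_{G_i}$ independently to a proper $(2n-1)$-edge coloring of $G_i$ using the consequence of Theorem~\ref{th:complete} recorded above, and colour every edge of $M$ with the new colour $2n$ — since $M$ is a matching and the $G_i$-colorings avoid $2n$, this gives a proper $2n$-edge coloring of $G\square K_2$ extending $\varphi$. If only one copy, say $G_1$, carries precolored edges, then it carries at most $n$ (resp. $n+1$) of them; if their number is at most $n-1$ (resp. $n$), or equals $n$ (resp. $n+1$) but does not form the exceptional configuration of Theorem~\ref{th:complete}, then $\varphi|_{G_1}$ extends to a proper $(2n-1)$-edge coloring of $G_1$, which we copy onto $G_2$ along $M$ and complete by colouring $M$ with $2n$.

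In the second case we mimic the argument of Theorem~\ref{compbip}. Using that $K_{2n}$ and $K_{2n-1}$ are regular of degree far larger than the number of precolored edges, we greedily pick matchings $M_1\subseteq E(G_1)$ and $M_2\subseteq E(G_2)$ with corresponding edges such that each endpoint in $G_i$ of a precolored edge of $M$ is covered by $M_i$, no edge of $M_1\cup M_2$ is precolored, and no edge of $M_i$ is adjacent to a precolored edge of $G_i$ whose colour equals that of the precolored $M$-edge it is adjacent to. Colouring each edge of $M_i$ by the colour of its adjacent $M$-edge turns $\varphi|_{G_i}$ into a precoloring $\varphi_i$ of $G_i$; in the generic situation $\varphi_i$ has at most $n-1$ (resp. $n$) colored edges, so it extends to a proper $(2n-1)$-edge coloring $f_i$ of $G_i$, and then — exactly as in the proof of Theorem~\ref{compbip}, using that $G_i$ is regular — recolouring all edges of $M_1\cup M_2$ with colour $2n$ and giving every remaining edge of $M$ the colour missing at its endpoints yields a proper $2n$-edge coloring extending $\varphi$. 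When only one copy originally carries precolored edges, the count for $\varphi_1$ may reach $n$ (resp. $n+1$) and then form an exceptional configuration of Theorem~\ref{th:complete}; as in the final part of the proof of Theorem~\ref{compbip} we handle this by first deleting a colour from a suitable edge of $M_1$, reducing to the generic count.

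What remains, and what I expect to be the main obstacle, is the treatment of the exceptional configurations of Theorem~\ref{th:complete}. In part (i) the exceptional $\varphi|_{G_1}$ is a perfect matching $P$ of $K_{2n}$ with $n-1$ edges of one colour, say $1$, and one edge $e_0=xy$ of another colour, say $2$; here the surplus colour $2n$ rescues us. Using the classical fact that every perfect matching of $K_{2n}$ extends to a $1$-factorization (equivalently $\chi'(K_{2n}-P)=2n-2$), properly colour $G_1-P$ with the colours $3,\dots,2n$ and then colour $P\setminus\{e_0\}$ with $1$ and $e_0$ with $2$; in the resulting proper $2n$-edge coloring of $G_1$ every vertex other than $x,y$ misses only colour $2$ and $x,y$ miss only colour $1$. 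Colour the $M$-edge at each vertex with its unique missing colour, and then colour $G_2$ by the mirror construction — colour a perfect matching of $G_2-\{x_2,y_2\}\cong K_{2n-2}$ (with $x_2,y_2$ corresponding to $x,y$) with colour $1$, the edge $x_2y_2$ with $2$, and the remaining $K_{2n}$-minus-a-perfect-matching with $3,\dots,2n$ — which produces a proper $2n$-edge coloring of $G\square K_2$ extending $\varphi$, precisely because the missing-colour patterns of the two pieces were tailored to agree on $M$. For part (ii) the exceptional $\varphi|_{G_1}$ consists of $n-2$ independent edges of one colour together with a disjoint triangle carrying three other colours, and a similar but more delicate construction is needed: extend the $n-2$ edges to a near-$1$-factor, absorb the rainbow triangle using the two colours left free at each vertex of the $(2n-2)$-regular graph $K_{2n-1}$, and again arrange the missing-colour pattern on $G_1$ so that a matching $2n$-edge coloring of $G_2$ can be fitted through $M$. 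Getting this triangle case right is where essentially all the work of the proof lies; everything else is a routine adaptation of the arguments already carried out for trees and for complete bipartite graphs.
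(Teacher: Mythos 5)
There is a genuine gap, and it sits exactly where your proposal leans hardest on ``routine adaptation.'' The covering-matching step fails for part (ii) for a simple counting reason: if $m$ of the precolored edges lie in $M$, their endpoints in $G_1$ form a set $S$ of $m$ vertices, and a matching $M_1$ in which every edge meets exactly one precolored $M$-edge must match $S$ into $V(G_1)\setminus S$, which has only $2n-1-m$ vertices. Already for $m\ge n$ this is impossible ($m > 2n-1-m$), and letting an $M_1$-edge join two vertices of $S$ breaks the construction whenever the two corresponding $M$-edges carry different colors. So the degree of $K_{2n-1}$ is \emph{not} ``far larger than the number of precolored edges'' in the relevant regime, and no greedy choice can produce the matchings you require. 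The paper's proof of (ii) is almost entirely devoted to this obstruction: it splits into three cases according to how the colors distribute over $M$ versus $E(K^1_{2n-1})\cup E(K^2_{2n-1})$, replaces the covering matching by missing-color arguments or by matchings covering only the edges of a repeated color, and analyses explicitly what a failed ``good matching'' forces. Relatedly, the paper must treat $K_3$ and $K_5$ separately (via the odd-cycle result and Lemma~\ref{cl:K5}) because even the weakened arguments need $n\ge 4$; your proposal does not address these cases at all.

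Secondly, your treatment of the exceptional configurations of Theorem~\ref{th:complete} is incomplete where it matters. The explicit construction you give for the perfect-matching exception in part (i) is plausible (and is a nice alternative to the paper's device, which avoids the exceptional configurations by choosing the auxiliary precolorings so that they never arise). But for part (ii) you state only that ``a similar but more delicate construction is needed'' for the $(n-2)$-matching-plus-rainbow-triangle configuration and declare this to be where all the work lies; that is an acknowledged placeholder, not a proof, and in fact it misidentifies the main difficulty, which (as above) is the nonexistence of the covering matchings rather than the exceptional precolorings themselves.
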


We note that the number of colors used in part (ii) is best possible, since
there are partial edge colorings of just two edges in $K_{2n-1} \square K_2$
that are not extendable to proper $(2n-1)$-colorings.

Before, we prove the general case of Theorem \ref{prop:complete}, we separately consider 
the case of $K_5 \square K_2$.
We first note the following lemma, which is easily proved using the fact
that $K_{2n-1} \square K_2$ is Class 1.
	
	\begin{lemma}
	\label{cl:alledges}
		If $\varphi$ is an edge precoloring of $K_{2n-1} \square K_2$ with $n+1$ precolored edges, 
		where at least $n$ edges have the same color, then
		$\varphi$ is extendable.
	\end{lemma}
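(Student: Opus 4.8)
The plan is to peel off the large monochromatic matching, color what remains with a fresh palette of $2n-1$ colors, and then repair the (at most one) leftover precolored edge by a global transposition of colors. In the context of Theorem~\ref{prop:complete}(ii) we aim to extend $\varphi$ to a proper $2n$-edge coloring, so we may assume $\varphi$ uses colors from $\{1,\dots,2n\}$. Without loss of generality the color carried by at least $n$ of the $n+1$ precolored edges is $1$; let $F$ be the set of all edges colored $1$ under $\varphi$. Since $\varphi$ is proper, $F$ is a matching, and $|F|\in\{n,n+1\}$. If $|F|=n$, let $e^{*}$ be the unique remaining precolored edge and put $c^{*}=\varphi(e^{*})$; then $c^{*}\neq 1$ (otherwise $|F|=n+1$), so $c^{*}\in\{2,\dots,2n\}$. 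If $|F|=n+1$ there is no such edge.

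Set $H=(K_{2n-1}\square K_2)-F$. Recall that $\Delta(K_{2n-1}\square K_2)=(2n-2)+1=2n-1$ and that $K_{2n-1}\square K_2$ is Class 1: fixing a common proper $(2n-1)$-edge coloring $\rho$ of two copies of $K_{2n-1}$ --- in which, by a simple edge count, every vertex misses exactly one color --- and coloring each edge of the connecting perfect matching at a vertex $v$ by the color missing at $v$ yields a proper $(2n-1)$-edge coloring. Since $H$ is a subgraph of $K_{2n-1}\square K_2$, we obtain $\chi'(H)\le 2n-1$, so I would fix a proper edge coloring $g$ of $H$ using only colors from $\{2,3,\dots,2n\}$.

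It then remains to glue the pieces together. Color every edge of $F$ with color $1$. If there is no edge $e^{*}$, color $H$ by $g$; the result is a proper $2n$-edge coloring extending $\varphi$, since $F$ is a matching and its color $1$ is not used on $H$. If $e^{*}$ is present, then $e^{*}\in E(H)$ because $c^{*}\neq 1$; if $g(e^{*})=c^{*}$ we are done as before, and otherwise, with $d=g(e^{*})\in\{2,\dots,2n\}$, interchanging the colors $d$ and $c^{*}$ throughout $g$ gives a proper edge coloring $g'$ of $H$ with $g'(e^{*})=c^{*}$, and coloring $F$ by $1$ together with $g'$ on $H$ extends $\varphi$.

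The one point that genuinely matters is squeezing everything into $2n$ colors: a crude Vizing bound would only give $\chi'(H)\le\Delta(H)+1=2n$, hence $2n+1$ colors in total, so it is essential to use that $H$ is a subgraph of the Class 1 graph $K_{2n-1}\square K_2$ in order to secure the palette $\{2,\dots,2n\}$ for $H$, disjoint from the color $1$ used on $F$. Everything else is routine bookkeeping.
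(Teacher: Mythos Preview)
Your proof is correct and follows essentially the same approach the paper indicates: the paper states the lemma ``is easily proved using the fact that $K_{2n-1}\square K_2$ is Class~1,'' and your argument is a clean fleshing-out of exactly this idea, including the color-swap trick to handle the single leftover precolored edge.
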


In the following, we shall say that a matching {\em covers} a set $S$ of edges,
if every edge of $S$ is adjacent to an en edge of the matching.

	\begin{lemma}
	\label{cl:K5}
		If $\varphi$ is a partial edge coloring of at most $4$ edges of $K_5 \square K_2$,
		then $\varphi$ is extendable to a proper $6$-edge coloring.
	\end{lemma}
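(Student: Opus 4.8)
The plan is to fix a perfect matching $M$ of $K_5 \square K_2$ for which $(K_5 \square K_2)-M$ is the disjoint union of two copies $A$ and $B$ of $K_5$, joined by $M$ via a natural bijection between their vertex sets; throughout I write $v'$ for the vertex of $B$ corresponding to $v\in A$, and similarly for edges. Since $\varphi$ colours at most four edges it uses at most four colours, so after relabelling I may assume $\varphi$ uses only colours from $\{1,2,3,4\}$, leaving $5$ and $6$ entirely unused. I will use two facts, both immediate from Theorem~\ref{th:complete}(ii): (a) every precoloring of at most three edges of $K_5$ extends to a proper $5$-edge colouring; (b) a precoloring of four edges of $K_5$ extends to a proper $5$-edge colouring unless it is the exceptional configuration of Theorem~\ref{th:complete}(ii) (one edge coloured $c$, a disjoint triangle coloured by three colours $\neq c$), and in that remaining case the six uncoloured edges induce a $K_{3,2}$, so a short direct verification gives a proper $6$-edge colouring of $K_5$ extending it. Combining (a) and (b): every precoloring of at most four edges of $K_5$ extends to a proper $6$-edge colouring of $K_5$; call this Claim~A.

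First suppose no edge of $M$ is precolored. If all precolored edges lie in one copy, say $A$, then by Claim~A extend $\varphi|_A$ to a proper $6$-edge colouring $f_A$ of $A$, transfer $f_A$ to $B$ along the bijection, and colour each matching edge $vv'$ by one of the (at least two) colours missing at $v$ under $f_A$; as $M$ is a matching this is proper. If both $A$ and $B$ contain a precolored edge, then each contains at most three, so by (a) extend $\varphi|_A$ and $\varphi|_B$ to proper $5$-edge colourings using colours $1,\dots,5$, and colour all of $M$ with colour $6$.

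Now suppose $M$ contains $t\ge 1$ precolored edges; then $A$ and $B$ together contain at most three precolored edges, and by symmetry assume $A$ contains at least as many as $B$, so $B$ contains at most one. The idea is to choose proper $6$-edge colourings $f_A$ of $A$ and $f_B$ of $B$ extending the respective restrictions of $\varphi$, so that for every precolored matching edge $vv'$ of colour $c$ the colour $c$ is missing at $v$ under $f_A$ and at $v'$ under $f_B$, and so that for every uncoloured matching edge $uu'$ the colours missing at $u$ under $f_A$ and at $u'$ under $f_B$ have a common element. Given such $f_A,f_B$, colouring the precolored matching edges by their prescribed colours and each uncoloured matching edge by a common missing colour yields a proper $6$-edge colouring extending $\varphi$, since $M$ is a matching. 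When $B$ contains no precolored edge one first builds $f_A$ and then lets $f_B$ be its transfer to $B$, so the condition on uncoloured matching edges is automatic; the task then reduces to the following statement about $K_5$: every precoloring of at most three edges of $K_5$ extends to a proper $6$-edge colouring having a prescribed colour missing at each of $t$ specified vertices, where (number of precolored edges)$\,+\,t\le 4$ and no precolored edge at a specified vertex carries its prescribed colour. When $B$ also contains a precolored edge, only $(a,b,t)\in\{(1,1,1),(1,1,2),(2,1,1)\}$ can occur (with $a,b,t$ the numbers of precolored edges in $A$, $B$, $M$), and there $f_B$ is obtained from the transfer of $f_A$ by a local recolouring around the single precolored edge of $B$ (a short Kempe-chain argument changing the missing sets at only two vertices).

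The main obstacle is the boxed $K_5$-statement. It is not a pure list-edge-colouring fact, since the natural lists there may have size only $4<\chi'_\ell(K_5)$, and a naive attempt—colouring every uncoloured matching edge with the single free colour $5$ and forbidding $5$ throughout $A$—fails for some configurations (for instance when $A$ has one precolored edge and $M$ two precolored edges of equal colour, a counting obstruction appears). I would instead prove it by a short case analysis on the number and relative position of the at most three precolored edges and the at most three specified vertices, using the bound $(\text{precolored edges})+t\le 4$ to rule out the obstructing configurations, together with the standard fact that every bijection from $V(K_5)$ onto a $5$-element colour set arises as the missing-colour map of some proper $5$-edge colouring of $K_5$; this lets one prescribe missing colours freely at the specified vertices when they are distinct, and forces one to a genuine $6$-edge colouring (as in Claim~A's exceptional case) when a colour repeats among the matching edges. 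Finally, the configurations in which at least three precolored edges share a colour can alternatively be handled directly by Lemma~\ref{cl:alledges}.
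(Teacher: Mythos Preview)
Your outline has the right overall architecture, and the reduction to a ``boxed $K_5$-statement'' (extend at most three precolored edges to a $6$-edge-colouring of $K_5$ with prescribed missing colours at up to $4-(\text{\# precolored edges})$ specified vertices) is an attractive way to organise the work. However, two places are not yet proofs, and one of them is a genuine gap.

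First, the boxed statement itself is only asserted. You acknowledge it as ``the main obstacle'' and say you would prove it ``by a short case analysis''; but the analysis is not short once you allow repeated prescribed colours and precolored edges adjacent to specified vertices, and in practice it is comparable in length to what the paper does directly. (The paper's proof is precisely such a case analysis, organised by how many precolored edges lie in $M$ and whether both copies carry precolored edges; it never isolates your boxed statement, but the work is essentially the same.)

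Second, and more seriously, the Kempe-chain step for the cases $(a,b,t)\in\{(1,1,1),(1,1,2),(2,1,1)\}$ does not go through as stated. Suppose $f_A$ is a $5$-edge-colouring of $A$ using colours $\{1,\dots,5\}$, so that each vertex misses exactly one of these plus the unused colour $6$, and let $g_B$ be its transfer. If $g_B(e_B)=c'\neq c_B$, then the $(c_B,c')$-Kempe subgraph in a $5$-edge-colouring of $K_5$ is a single Hamiltonian path, so the component containing $e_B$ is the whole path and the swap is a \emph{global} interchange of $c_B$ and $c'$, not a local change at two vertices. After this global swap, any precolored matching edge of colour $c_B$ has its prescribed missing colour destroyed at its $B$-end: that vertex $v'$ had missing set $\{c_B,6\}$ before, and $\{c',6\}$ after, while you still need $c_B$ missing there. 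Concretely, take $t=1$ with the single precolored $M$-edge coloured $c_B$, and $e_B$ not incident to its $B$-end; then nothing forbids $g_B(e_B)\neq c_B$, and the swap breaks the constraint. Your phrase ``changing the missing sets at only two vertices'' is therefore incorrect here, and the argument as written fails. The natural repair is to transfer the $B$-constraint into $A$ (precolor the $A$-edge corresponding to $e_B$ with $c_B$) before building $f_A$; this keeps $(\text{\# precolored})+t\le 4$, but it can clash with an existing precolored edge of $A$ at or adjacent to that position, and handling those clashes is exactly the case split the paper carries out in its Case~2. So your plan is salvageable, but not without essentially the same case analysis the paper performs.
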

	\begin{proof}
		Let $M$ be a perfect matching in 
		$K_5 \square K_2$ such that $K_5 \square K_2 - M$ is 
		isomorphic to two copies $K_5^1$ and $K_5^2$ of $K_5$.
		If no precolored edge is in $M$, then we may proceed as in the proof of Theorem
		\ref{compbip}. Thus we assume that at least one edge of $M$ is precolored. 
		
		We shall consider many different cases. By Lemma \ref{cl:alledges}, 
		we may assume that at least two different colors are used in the precoloring
		$\varphi$.

		\bigskip
		
		{\bf Case 1.} {\it All precolored edges are in $E(K^1_{5}) \cup M$:}
		
		Suppose first that exactly one color appears on the precolored edges of $M$, 
		color $1$ say. 
		Then we consider the precoloring of $K^1_{5}$ obtained from $\varphi$
		by removing the color $1$ from any edge $K^1_{5}$ that is precolored $1$.
		By Theorem \ref{th:complete}, 
		this precoloring is extendable to a proper edge coloring
		of $K^1_{5}$ using colors $2,\dots,6$. By recoloring any edge of $K^1_{5}$ 
		that is $\varphi$-precolored $1$ by the color $1$, 
		coloring $K^2_{5}$ correspondingly, and then coloring
		every uncolored edge of $M$ by an appropriate color missing at 
		its endpoints we obtain an extension of
		$\varphi$.
		
		Suppose now that at least two colors appears on the precolored edges of $M$.
		We consider some different subcases.
		
		\bigskip
		{\bf Case 1.1.} {\it Exactly two edges of $M$ are precolored:}
		
		If there is at least one color $c$ used on the precolored edges of $K^1_{5}$
		that neither appears on an edge of $M$, nor is the edge precolored $c$
		adjacent to a precolored edge of $M$,
		then there is a matching $M_1 \subseteq E(K^1_{5})$ of uncolored edges
		satisfying the following
		\begin{itemize}
		
			\item every edge of $M_1$ is adjacent to exactly one precolored edge of $M$,
			
			\item every precolored edge of $M$ is adjacent to an edge of $M_1$,
			
			\item no edge of $M_1$ is adjacent to two precolored edges $e_1 \in E(K^1_{5})$
			and $e_2 \in M$ that have the same color under $\varphi$.
		
		\end{itemize}
			We call such a matching a {\em $\varphi$-good} matching. Moreover, 
			it is easy to see
			that we can pick this matching such that if we color the edges of $M_1$ by the
			color of the precolored adjacent edges in $M$, then this coloring along with
			the restriction of $\varphi$ to $K^1_{5}$ does not satisfy the 
			condition in Theorem
			\ref{th:complete}. Hence, the obtained $5$-edge precoloring of $K^1_{5}$
			is extendable. By recoloring every edge of $M_1$
			by the color $6$, coloring $K_2$ correspondingly, and then coloring the edges
			of $M$ by an appropriate color not appearing at its endpoints, 
			we obtain an extension
			of $\varphi$.
			
			Suppose now that there is no $\varphi$-good matching, but there is a color $c_1$
			that appears in $K^1_5$ but not in $M$. 
			Since there is no $\varphi$-good matching,
			the edge $e_1$ colored $c_1$
			is adjacent to an edge $e$ of $M$ that is colored $c_2 \neq c_1$, but not to
			the other precolored edge of $M$. Moreover, there is another
			edge $e'_1$ in $K^1_5$ colored $c_2$. 
			Now, it is easy to see that this implies that there is a proper edge
			coloring of $K_5 \square K_2$ with colors $\{1,\dots,6\} \setminus \{c_2\}$ 
			that agrees with all precolored edges that are not precolored $c_2$.
			Hence, $\varphi$ is extendable.
			
			Next, assume that the same two colors, say $1$ and $2$, 
			are used both on the precolored
			edges of $M$ and on $K^1_{5}$. 
			Then there is at most one vertex of degree $4$
			in the graph obtained from $K^1_{5}$ by removing all precolored edges.
			We properly color the uncolored edges of $K^1_{5}$
			using colors $3,4,5,6$, color $K^2_{5}$ correspondingly, 
			and then proceed as before.
			
			\bigskip
		
		\bigskip
		{\bf Case 1.2.} {\it Exactly three edges of $M$ are precolored:}
		
		Let $e_1$ be 
		the precolored
		edge of $K^1_{5}$ and assume first that only two colors, say $1$ and $2$, 
		are used in the 
		precoloring $\varphi$.
		By Lemma \ref{cl:alledges}, we may assume that $\varphi(e_1) =1$ and exactly one edge
		of $M$ is colored $1$. Then we can pick an uncolored edge $e'_1$ of $K^1_5$, 
		that is not
		adjacent to any edge $\varphi$-colored $1$. We color $K^1_5 - \{e_1,e'_1\}$
		properly by colors $3,4,5,6$, color $e'_1$ by $1$ and proceed as before.
		
		Suppose now that three colors appear in the precoloring $\varphi$. If the
		color of the precolored edge in $K^1_{5}$ does not appear on an edge of
		$M$, then there is some color $c$ that appears on two edges in $M$, and a color
		$c'$ that only appears on one edge of $M$.
		This implies that there is a matching $M_1 \subseteq E(K^1_{5})$ of
		uncolored edges
		satisfying the following
		\begin{itemize}
		
			\item every edge of $M_1$ is adjacent to exactly one precolored edge of $M$,
			
			\item every precolored edge of $M$ colored $c$ is adjacent to an edge of $M_1$,
			
			\item the edge precolored $c'$ is not adjacent to an edge of $M_1$.
		
		\end{itemize}
		Consider the precoloring obtained from the restriction of $\varphi$ to $K^1_5$
		by in addition coloring every edge of $M_1$ by the 
		color $c$.
		It follows from Theorem \ref{th:complete} that there is an 
		extension of this coloring
		to $K^1_{5}$ using colors $\{1,2,3,4,5,6\} \setminus \{c'\}$. 
		Now, we obtain an extension
		of $\varphi$ by coloring the edges of $M_1$ by the color $c'$,  coloring $K^2_{5}$
		correspondingly, and finally coloring the edges of $M$ appropriately.
		
		If, on the other hand, there is a color $c$ which appears 
		both in $K^1_{5}$ and in $M$,
		then we proceed as follows.
		We pick a proper edge coloring $f$ of $K^1_{5}$ using 
		colors $\{1,2,3,4,5,6\}\setminus \{c\}$,
		so that for every vertex $v \in V(K^1_{5})$, 
		if there is a precolored edge of $M$ incident with
		$v$, then the color of the edge of $M$ does not appear at $v$ under $f$.
		Next, we recolor the edges of $K^1_5$ $\varphi$-precolored $c$ 
		by the color $c$, and obtain an extension
		of $\varphi$ as before.

		The case when all four colors appear in $\varphi$ 
		can be dealt with using a similar argument
		as in the preceding paragraph.

		\bigskip
		
		\bigskip
		{\bf Case 1.3.} {\it Exactly four edges of $M$ are precolored:}

		Suppose now that all four precolored edges are in $M$.
		If at most three colors appear in the precoloring $\varphi$, then there
		is a color $c$ that appears on at least two edges. 
		If only two colors appear in $\varphi$, then by Lemma \ref{cl:alledges},
		we may assume that both colors appear on two edges. Thus
		there is a matching $M_1$
		in $K^1_5$ covering all precolored edges of $M$ and such that no
		edge in $M_1$ is adjacent to two edges precolored with different colors.
		Thus, $\varphi$ is extendable, as before.
		If, on the other hand three colors appear in $\varphi$, then we can pick
		a proper edge coloring $f$ of $K^1_5$ using colors $\{1,\dots, 6\} \setminus \{c\}$,
		such that no precolored edge of $M$ is adjacent to an edge of the same color
		under $f$. Hence, $\varphi$ is extendable. Note that we can use a similar
		argument if all of the colors $1,2,3,4$ appear on some edge under $\varphi$.

		\bigskip
		
		Note that in all cases above, $K^1_{5}$ is first colored, and then $K^2_{5}$
		is colored correspondingly. We shall use this property 
		when we consider the next case.
		
		\bigskip
		
		{\bf Case 2.} {\it Both $K^1_{5}$ and $K^2_{5}$ contains at least one precolored
		edge:}
		
		The condition imlies that $M$ contains one or two precolored edges.
		Assume first that $M$ contains only one precolored edge, and so we may assume that
		$K^1_{5}$ contains two precolored edges, and $K^2_{5}$ one.
		Let $e_1$ and $e'_1$ be the precolored edges of $K^1_{5}$, $e_2$ and $e'_2$
		the corresponding edges of $K^2_{5}$ respectively,
		and let $e''_2$ be the precolored edge of $K^2_{5}$, and $e''_1$ the corresponding
		edge of $K^1_{5}$. Furthermore, let $e$ be the precolored edge of $M$.

		Consider the restriction of $\varphi$ to $K^1_{5}$.
		If we can assign the color $\varphi(e''_2)$ to $e''_1$ so that the 
		resulting coloring
		$\varphi_1$ of $K^1_{5}$ is proper,
		then we may proceed as in Case 1 (since in that case $K^2_{5}$ 
		is always colored correspondingly).
		Thus we may assume that either
		\begin{itemize}
		
			\item[(a)] $e''_1 \in \{e_1,e'_1\}$, or 
			
			\item[(b)] $e''_1$ is adjacent to one of
			the edges in $\{e_1,e'_1\}$ and $e''_2$ has the same color as 
			one adjacent edge in $\{e_1,e'_1\}$.
		\end{itemize}
		
		By Lemma \ref{cl:alledges}, we may further assume that no color 
		appears on three edges, and thus
		at most two edges are precolored by the same color. Then, unless
		$e$ is adjacent to two precolored edges and there is another
		edge, $e_1$ say, precolored $\varphi(e)$, that is disjoint from all these three edges,
		there is an uncolored
		edge $e^{(3)}_1$ in $K^1_5$ that is adjacent to $e$ but not adjacent to a 
		precolored edge of $K^1_5$ colored $\varphi(e)$, 
		and, similarly for the corresponding edge $e^{(3)}_2$ of $K^2_5$.
		From the restriction of $\varphi$ to $K^1_5$ and $K^2_5$ we obtain new 
		$5$-edge precolorings
		by coloring the edges 
		$e^{(3)}_1$ and $e^{(3)}_2$  by the color $\varphi(e)$. 
		Now by Theorem \ref{th:complete}
		these precolorings are extendable, and we may finish 
		the argument by proceeeding as above.
		
		Suppose now that $e$ is adjacent to two precolored edges and there is
		one additional edge precolored $\varphi(e)$ in $E(K^1_5) \cup E(K^2_5)$.
		It is not hard to see that this implies that there are uncolored corresponding edges
		$e^{(4)}_1 \in E(K^1_{5})$ and $e^{(4)}_1 \in E(K^2_{5})$
		that are not adjacent to any edges precolored $\varphi(e)$. Hence, by coloring
		these edges $\varphi(e)$, and also, $e_2$ by the color $\varphi(e)$,
		from the restrictions of $\varphi$ to $K^1_5$ and $K^2_5$, respectively,
		we obtain extendable precolorings. Moreover, by construction, 
		the extensions of these colorings
		satisfy that no edge colored $c$ is adjacent to $e$. Hence, $\varphi$
		is extendable.

		\bigskip
		
		Let us now assume that $M$ contains two precolored edges. So $K^1_5$ and
		$K^2_5$ both contains precisely one precolored edge, $e_1$ and $e'_2$, respectively.
		Denote the corresponding edges of $K^2_5$ and $K^1_5$ by $e_2$ and 
		$e'_1$, respectively.
		As above, it follows that either 
		\begin{itemize}
		
		\item [(a)] $e_1 = e'_1$ and $\varphi(e_1) \neq \varphi(e'_2)$, or
		
		\item [(b)] $e_1$ and $e_2$ are adjacent and $\varphi(e_1) = \varphi(e'_2)$.
		
		\end{itemize}
		
		Suppose first that (a) holds.
		If both colors in $\{\varphi(e_1), \varphi(e'_2)\}$, say $1$ and $2$, 
		appear on the precolored
		edges of $M$, 
		then we pick corresponding uncolored edges $e''_1 \in E(K^1_5)$
		and $e''_2 \in E(K^2_5)$ that are not adjacent to $e_1$ or $e_2$ and
		color $e''_1$ and $e''_2$ by $1$ or $2$ so that the obtained coloring is proper. 
		Thereafter, we
		color $K^1_5-\{e'_1, e''_1\}$
		and $K^2_5-\{e'_2, e''_2\}$ properly using 
		colors $3,4,5,6$, and proceed as above.

		If  both colors in $\{\varphi(e_1), \varphi(e_2)\}$ do not appear on the precolored
		edges of $M$, then there are matchings $M_1 \subseteq E(K^1_5)$ 
		and $M_2 \subseteq E(K^2_5)$ of corresponding uncolored edges such that
		\begin{itemize}

		\item every precolored edge of $M$ is adjacent to exactly one edge of $M_i$,
		
		\item every edge of $M_i$ is adjacent to a precolored edge of $M$,

		\item no edge of $M_i$ is adjacent to two precolored edges of the same color. 
		
		\end{itemize}
		Now, consider the restriction of $\varphi$ to $K^1_5$ and $K^2_5$, respectively.
		By, in addition, coloring the edges of $M_1$ and $M_2$ by the color of an adjacent
		precolored edge of $M$ we obtain extendable $5$-edge precolorings of 
		$K^1_5$ and $K^2_5$, respectively. 
		Given extensions of these precolorings, we may recolor the edges of
		$M_1$ and $M_2$ by color $6$, and then color the edges of 
		$M$ appropriately to obtain
		an extension of $\varphi$ as before.
		
		Suppose now that (b) holds. 
		Since $\varphi(e_1) = \varphi(e'_2)$, and any color appears
		on at most two edges under $\varphi$, there are matchings 
		$M_1$ and $M_2$ as described
		in the preceding paragraph.
		Thus, we proceed similarly, and
		this completes the proof of the lemma.
	\end{proof}

\begin{proof}[Proof of Theorem \ref{prop:complete}.]
We first prove part (i).
	Denote by $M$ the matching of $K_{2n} \square K_2$
	such that $K_{2n} \square K_2 - M$ is isomorphic to two copies
	$K^1_{2n}$ and $K^2_{2n}$. The cases when no precolored edges
	are in $M$, can be handled as in the proof of Theorem \ref{compbip},
	so we omit the details here.
	
	In the case when $M$ contains at least one
	precolored edge, then we may select a matching $M_1$ in $K^1_{2n}$
	as in the proof of Theorem \ref{compbip} (and possibly also a matching $M_2$
	of corresponding edges in $K^2_{2n}$). The only essential difference in the
	argument is that since $K_{2n}$ contains triangles, we can only ensure
	that $n$ such edges forming the matching $M_1$ can be selected greedily,
	although $K_{2n}$ has vertex degree $2n-1$. Nevertheless, since
	$K_{2n} \square K_2$ contains at most $n$ precolored edges, this suffices
	for our purposes. Apart from this difference, the argument is very similar to the
	one in the proof of Theorem \ref{compbip}, so we omit the details.
	
	\bigskip
	
	Let us now prove part (ii).
	Denote by $M$ the matching of $K_{2n-1} \square K_2$
	such that $K_{2n-1} \square K_2 - M$ is isomorphic to two copies
	$K^1_{2n-1}$ and $K^2_{2n-1}$ of $K_{2n-1}$.

	The case of $K_3$ follows from the result on odd cycles proved in Section
	4, and the case of $K_5$
	is dealt with by the above lemma, so let us now consider the case of 
	$K_{2n-1} \square K_2$ for $n \geq 4$.
	
	We shall consider a number of different cases.
	In many of these cases we shall use strategies which are similar to the 
	ones used in the proof
	of Theorem \ref{compbip} and/or Lemma \ref{cl:K5}. 
	Therefore we shall be content with sketching the arguments, 
	when they are similar
	to ones that have been described above.

	As before, the case when no precolored edge is in $M$ can be dealt with as in the
	proof of Theorem \ref{compbip}, so in the following
	we assume that at least one precolored edge is
	contained in $M$.
	The rest of the proof breaks into the following cases:
	
	\begin{itemize}
	
	\item[(1)] Only one color appear on the precolored edges in $M$.
	
	\item[(2)] At least two colors appear on the precolored edges in $M$, but
	at most one color appears on the edges in $E(K^1_{2n-1}) \cup E(K^2_{2n-1})$.
	
	\item[(3)] At least two colors appear on the precolored edges of $M$
	and two colors appear on the precolored edges in 
	$E(K^1_{2n-1}) \cup E(K^2_{2n-1})$.
	
	\end{itemize}
	
	\bigskip

	{\bf Case 1.} {\it Only one color appears on the precolored edges in $M$:}

	Assume, for simplicity, that color $1$ appears on the edges of $M$.
	If all precolored edges are in $M \cup E(K^1_{2n-1})$, then consider
	the precoloring obtained from the restriction of $\varphi$ to $K^1_{2n-1}$
	by removing the color $1$ from all edges $\varphi$-colored $1$. 
	By Theorem \ref{th:complete}, this precoloring
	is extendable to a proper edge coloring using colors $2,\dots,2n$, 
	and we obtain an extension of $\varphi$ by recoloring 
	the edges of $K^1_{2n-1}$ that are $\varphi$-colored $1$ by the color $1$,
	coloring $K^2_{2n-1}$ correspondingly and then coloring every edge of $M$
	by the color $1$ or $2n$.
	
	Suppose now that both $K^1_{2n-1}$ and $K^2_{2n-1}$ contains at least one precolored edge.
	By Lemma \ref{cl:alledges}, we may assume that there are at least two
	edges in $E(K^1_{2n-1}) \cup E(K^2_{2n-1})$ precolored by a color distinct from $1$.
	We select a matching $M_1$ of edges in $K^1_{2n-1}$ of 
	maximum size, so that $M_1$ and the set $M_2$
	of corresponding edges in $K^2_{2n-1}$ satisfy the following:
	\begin{itemize}
	
		\item every precolored edge of $M_i$ is adjacent to at least one
		precolored edge of $M$,
		
		\item no edge of $M_i$ is adjacent to an edge of $E(K^1_{2n-1}) \cup E(K^2_{2n-1})$
		that is precolored $1$,
		
		\item no edge of $M_i$ is precolored.
	
	\end{itemize}
	If every precolored edge of $M$ is adjacent to an edge of $M_1$, then we consider the
	precolorings obtained from the restrictions of $\varphi$ to $K^1_{2n-1}$ 
	and $K^2_{2n-1}$,
	respectively, by in addition coloring all edges of $M_1$ and $M_2$ by the color $1$,
	and then proceed as before.
	
	Suppose now that some precolored edge of $M$ is not adjacent to an edge of $M_1$.
	Since at most $n-2$ edges of $E(K^1_{2n-1}) \cup E(K^2_{2n-1})$ are precolored $1$,
	edges of $M_1$ may be adjacent to several precolored edges of $M$,
	and $M_1$ is maximum with respect to the aforementioned properties, 
	it follows that it must be the case that only one 
	edge $u_1u_2$ of $M$ is precolored (where $u_i \in V(K_{2n-1}^i$),
	there is a matching $M'$ of $n-2$ edges in 
	$K^1_{2n-1}$
	such that every edge of $M'$ is either precolored $1$, or the corresponding
	edge of $K^2_{2n-1}$ is precolored $1$. Moreover, $u_1$ is 
	incident with two edges $e_1$
	and $e_2$ that are independent from $M'$ and satisfying that 
	$e_i$ or the corresponding
	edge of $K^2_{2n-1}$ is precolored by a color distinct from $1$.
	Now, from the restriction of $\varphi$ to $K^1_{2n-1}$ we 
	define a new precoloring $\varphi_1$
	by coloring all edges of $M'$ by the color $1$, and, in addition, coloring the unique
	edge of $K^1_{2n-1}$ that is adjacent to both $e_1$ and $e_2$ by the color $1$. We define
	an analogous precoloring $\varphi_2$ of $K^2_{2n-1}$.
	
	By Theorem \ref{th:complete}, both $\varphi_1$ and $\varphi_2$ are extendable to
	proper $(2n-1)$-edge colorings. Moreover, it is easy to see that neither
	$u_1$ nor $u_2$ is
	incident to an edge colored $1$ in these extensions. Consequently,
	$\varphi$ is extendable.

	\bigskip
	
	{\bf Case 2.}
	{\it At least two colors appear on the precolored edges in $M$, but
	at most one color appears on the edges in $E(K^1_{2n-1}) \cup E(K^2_{2n-1})$:}
	
	We first consider the case when all precolored edges are in  $M \cup E(K^1_{2n-1})$.
	
	Suppose first that all  precolored edges are contained in $M$. If every color
	appears on at most one edge in $M$, then $\varphi$ is extendable, 
	because we can choose
	a proper $(2n-1)$-edge coloring $f$ of $K^1_{2n-1}$ 
	so that for every vertex $v \in V(K_{2n-1})$,
	if the edge of $M$ incident with $v$ is colored $i \in \{1,\dots, 2n-1\}$, 
	then no edge incident with $v$ is colored $i$ under  $f$. 
	A similar argument applies if one color appears on 
	at least two edges in $M$
	and all other colors appear on at most one edge. 
	
	Assume now that there are two colors $c_1$ and $c_2$ 
	that both appear on at least two edges.
	Then there is matching $M_1$ of uncolored edges in $K^1_{2n-1}$ 
	covering all vertices incident with precolored
	edges, and so that no edge of $M$ is adjacent to 
	two precolored edges of two different colors.
	By proceeding as before, it is now straightforward that $\varphi$ is extendable.
	
	\bigskip
	
	Suppose now that $E(K^1_{2n-1})$ contains at least one precolored edge, 
	precolored $c$ say.
	If all colors on the edges of $M$ are distinct, except that several
	edges of $M$ may be colored $c$, then there is a proper edge coloring $f$ of $K^1_{2n-1}$
	using colors $\{1,\dots, 2n\} \setminus \{c\}$, such that no precolored edge of $M$
	is adjacent to an edge of the same color under $f$. Hence, $\varphi$ is extendable.
	
	Assume now instead that some color $c_1 \neq c$ appears on at least two
	edges of $M$ and every other color used on an edge of $M$
	appear on exactly one edge of $M$. If 
	at most one edge of $K^1_{2n-1}$ is precolored $c$,
	then we
	proceed as before and pick a proper edge coloring $f$ of $K^1_{2n-1}$
	using colors $\{1,\dots, 2n\} \setminus \{c_1\}$
	that agrees with the restriction of $\varphi$ to $K^1_{2n-1}$ and
	where no precolored edge of $M$ is adjacent to an edge of the same color under
	$f$. If instead color $c$ is used on at least two edges of $K^1_{2n-1}$,
	then it is easy to see that there is a matching $M_1$ of uncolored edges in 
	$K^1_{2n-1}$ covering all 
	precolored edges of $M$, and such that no edge of $M_1$ is 
	adjacent to two precolored edges of $M$ that are colored by distinct colors
	or one precolored edge of $M$ and one from $K^1_{2n-1}$ that have the same color.
	Hence, $\varphi$ is extendable.

	Finally, let us assume that there are two colors $c_1,c_2 \neq c$
	that both appear on at least two edges of $M$. 
	Then there is
	a matching $M_1$ of uncolored edges in $K^1_{2n-1}$ covering all 
	precolored edges of $M$, such that no edge of $M_1$ is 
	adjacent to two precolored edges of $M$ that are colored by distinct colors
	or one precolored edge of $M$ and one from $K^1_{2n-1}$ that have the same color;
	arguing as in the preceding paragraph, it is easily verified that 
	our assumption on the colors on edges of $M$ implies that
	there is such a matching.	
	Since $M_1$ covers all precolored edges of $M$, we may proceed as before to obtain
	an extension of $\varphi$.

	\bigskip
	
	Let us now consider the case when both $K^1_{2n-1}$ and $K^2_{2n-1}$ 
	contains at least one precolored edge, precolored $c$ say.
	If all the colors on edges of $M$ are distinct, except that several
	edges of $M$ may be colored $c$, then $\varphi$ is extendable as in the preceding case.
	Consequently, suppose that there is a color $c_1 \neq c$ that appears on at least two
	edges $u_1u_2$ and $v_1v_2$ of $M$, where $u_i,v_i \in V(K^i_{2n-1})$. 
	
	We may assume that at least one precolored edge of $K^2_{2n-1}$ satisfies that
	the corresponding edge of $K^1_{2n-1}$ is adjacent to a precolored edge of
	$K^1_{2n-1}$, since otherwise we can define a precoloring of $K^1_{2n-1}$
	by coloring every edge $e$ that is precolored $c$, or satisfying that
	the corresponding edge of $K^2_{2n-1}$ is precolored $c$, by the color $c$,
	and then proceed as in the case when only
	edges of $K^1_{2n-1}$ and $M$ are precolored.
	This assumption implies that there are matchings $M_1$ and $M_2$
	of corresponding uncolored edges of $K^1_{2n-1}$ and $K^2_{2n-1}$, respectively,
	(containing $u_1v_1$ and $u_2v_2$, respectively, if these edges are uncolored)
	such that
	\begin{itemize}
		
		\item $M_i$ covers the precolored edges of $M$,
		
		\item no edge of $M_i$ adjacent to two precolored edges of $M$ of different colors,
		
		\item no edge of $M_i$ is adjacent to an edge of $K^i_{2n-1}$ precolored $c$ and
		an edge of $M$ precolored $c$.	
	\end{itemize}
	As before, this implies that $\varphi$ is extendable.

	\bigskip
	
	{\bf Case 3.}
	{\it At least two colors appear on the precolored edges of $M$
	and at least two colors appear on the precolored edges 
	in $E(K^1_{2n-1}) \cup E(K^2_{2n-1})$:}

	Suppose first that all precolored edges lie in $M \cup E(K^1_{2n-1})$.
	If there is a matching $M_1$ of uncolored edges in $K^1_{2n-1}$ covering
	all precolored edges of $M$, and such that no edge of $M_1$ is adjacent
	to two edges of $M$ of different colors, or a precolored edge of $M$
	and one of $K^1_{2n-1}$ of the same color, then $\varphi$ is extendable.
	Such a matching is called a {\em good matching}.
	
	On other hand, if there is no good matching, then at most two edges
	of $M$ are precolored. This is so, because otherwise we could
	select three edges for $M_1$, each of which is adjacent to at least 
	one precolored edge of $K^1_{2n-1}$, and thereafter select
	the rest of the edges of $M_1$ greedily.
	
	Now, since at least two colors appear on the edges in $K^1_{2n-1}$,
	if there is no good matching, then one precolored edge of $M$,
	colored $c_1$ say,
	must be adjacent to an edge $e'$ precolored $c_2 \in \{1,\dots, 2n-1\}$, and there
	is a matching $M'$ of 
	$n-2$ edges in $K^1_{2n-1}$, disjoint from $e'$, that is precolored $c_1$. 
	It is straightforward
	to verify that this precoloring is extendable, e.g. by first taking an extension
	of the edges colored by colors distinct from $c_1$ using colors 
	$\{1,\dots, 2n\} \setminus \{c_1\}$.
	
	\bigskip
	
		Let us now consider the case when both $K^1_{2n-1}$ and $K^2_{2n-1}$ 
	contains at least one precolored edge.
	Again, the idea is to select matchings $M_1$ and $M_2$ of uncolored
	and corresponding edges of $K^1_{2n-1}$ and $K^2_{2n-1}$, respectively, 
	satisfying the following conditions:
	\begin{itemize}
		
		\item $M_i$ covers the precolored edges of $M$,
		
		\item no edge of $M_i$ adjacent to two precolored edges of $M$ of different colors,
		
		\item no edge of $M_i$ is adjacent to an edge of $K^i_{2n-1}$ 
		and an edge of $M$ that are precolored by the same color.
	\end{itemize}
	If there are such matchings $M_1$ and $M_2$, then we can use them
	for finding an extension of $\varphi$ as before.
	
	Suppose now that there are no such matchings. It follows, as above, that then
	at most two edges of $M$ are precolored, by colors $c_1$ and $c_2$ say.
	Now, again as in the preceding case, one of the two precolored edges of $M$,
	the one precolored $c_1$ say, is adjacent to an edge $e'$ colored 
	$c_3 \in \{1,\dots, 2n-1\}$ 
	in $K^1_{2n-1}$
	or $K^2_{2n-1}$, say $K^1_{2n-1}$, and there is matching $M'$ in $K^1_{2n-1}$
	of $n-2$ edges, disjoint from
	$e'$,
	such that every edge in $M'$, or the corresponding edge in $K^2_{2n-1}$,
	is precolored $c_1$. Thus we can color all edges of this matching by the
	color $c_1$ and proceed as before. This completes the proof of the theorem.
\end{proof}

\section{Cartesian products with general graphs}

We have not been able to confirm Conjecture \ref{conj:general}
in the general case, but we can prove it for the case of regular triangle-free
graphs when the precolored edges are independent.

\begin{theorem}
\label{th:main}
	If $G$ is a triangle-free regular graph where every precoloring 
	of at most $k < \Delta(G)$
	independent edges
	are extendable to a $\chi'(G)$-edge coloring, then every precoloring of
	at most $k+1$ independent edges in 
	$G \square K_2$ is extendable
	to a $(\chi'(G)+1)$-edge coloring.
\end{theorem}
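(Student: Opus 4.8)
The plan is as follows. Write $d=\Delta(G)$, so that $G$ is $d$-regular and, since $G$ is triangle-free, $\chi'(G)\in\{d,d+1\}$ by Vizing's theorem; note that $G\square K_2$ is $(d+1)$-regular. Fix the perfect matching $M$ of $G\square K_2$ for which $G\square K_2-M$ is the disjoint union of two copies $G_1$ and $G_2$ of $G$, and call two edges (or two vertices) \emph{corresponding} if their endpoints (respectively, they) are joined across $M$, exactly as in the proof of Theorem~\ref{th:tree}. Throughout, $\varphi$ denotes a precoloring of at most $k+1$ independent edges of $G\square K_2$, which without loss of generality uses colors $1,\dots,\chi'(G)$.

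The first step is an auxiliary lemma: \emph{every precoloring of at most $k+1$ independent edges of $G$ extends to a proper $(\chi'(G)+1)$-edge coloring of $G$.} To prove it, delete the color $c_0$ from one of the precolored edges $e_0=uv$; what remains is a precoloring of at most $k$ independent edges of $G$, so by hypothesis it extends to a proper $\chi'(G)$-edge coloring $f$ of $G$. If $f(e_0)=c_0$ we are done; otherwise recolor $e_0$ by $c_0$, and recolor the edge at $u$ colored $c_0$ under $f$ (if one exists) and the edge at $v$ colored $c_0$ under $f$ (if one exists) by the new color $\chi'(G)+1$. If both these edges exist they have distinct second endpoints because $G$ is triangle-free, and neither of them is precolored since the only precolored edge meeting $\{u,v\}$ is $e_0$ itself; hence the result is a proper $(\chi'(G)+1)$-edge coloring extending the given precoloring. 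This is the only place where triangle-freeness enters in an essential way.

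Next, if no precolored edge lies in $M$, I would split $\varphi$ into its restrictions to $G_1$ and $G_2$. If each of these restrictions has at most $k$ precolored edges, extend each to a proper $\chi'(G)$-edge coloring by hypothesis and color every edge of $M$ by $\chi'(G)+1$; if instead one copy, say $G_1$, carries all $k+1$ precolored edges, extend $\varphi|_{G_1}$ to a proper $(\chi'(G)+1)$-edge coloring by the auxiliary lemma, color $G_2$ correspondingly, and color each edge $w_1w_2$ of $M$ by a color missing at $w_1$ (which is then missing at the corresponding vertex $w_2$ as well). When $M$ does contain a precolored edge — the substantial case — I would follow the scheme of the proofs of Theorems~\ref{th:tree} and~\ref{compbip}: greedily choose a matching $M_1\subseteq E(G_1)$ of non-precolored edges covering the $G_1$-endpoints of the precolored edges of $M$, subject to the usual conditions (no edge of $M_1$ adjacent to a precolored edge of $G_1$, nor to another endpoint of a precolored edge of $M$, nor to a precolored edge of $G_1$ of the same color as the precolored edge of $M$ it covers), and let $M_2$ consist of the corresponding edges of $G_2$; such matchings exist greedily since there are at most $k+1\le d$ precolored edges, all independent, while the degree is $d$. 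Coloring each edge of $M_i$ by the color of the adjacent precolored edge of $M$ turns $\varphi|_{G_i}$ into a proper precoloring $\varphi_i$ of $G_i$ with $p_i+p$ independent edges, where $p\ge 1$ is the number of precolored edges of $M$ and $p_i$ the number in $G_i$, so $p_1+p_2+p\le k+1$. One then extends $\varphi_1$ and $\varphi_2$ to proper colorings $H_1,H_2$ of $G_1,G_2$ — directly by the hypothesis when the relevant fake precoloring has at most $k$ edges, and by the auxiliary lemma when it has exactly $k+1$ edges (which can happen only when the other copy carries no precolored edge) — and reassembles: recolor the edges of $M_1\cup M_2$ by $\chi'(G)+1$, keep the precolored edges of $M$, and color each remaining edge of $M$ by a color missing at both endpoints, namely $\chi'(G)+1$ when it is available and otherwise the color that the recolored edge of $M_1$ at that endpoint has just released (which, by the correspondence, is released at the other endpoint too). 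When one copy carries no precolored edge, taking $H_2$ to be a corresponding copy of $H_1$ makes these missing colors agree across $M$ automatically.

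The main obstacle is precisely this reassembly in the last case, and it is most acute in the Class~$1$ situation $\chi'(G)=d$: there $G\square K_2$ is $(d+1)$-regular and $\chi'(G)+1=d+1$, so a $(d+1)$-edge coloring of $G\square K_2$ is \emph{tight} — every color occurs at every vertex, and the color on an edge $w_1w_2\in M$ is forced to be the unique color missing at $w_1$ among its $G_1$-edges, which must then coincide with the unique color missing at $w_2$ among its $G_2$-edges. Securing this compatibility along all of $M$ while simultaneously respecting the precolored edges of $M$ (which force $H_1$ and $H_2$ to disagree near those edges) and controlling the at most two edges on which the auxiliary lemma is compelled to spend the color $\chi'(G)+1$ — in particular choosing \emph{which} precolored edge of $\varphi_i$ to uncolor in the lemma and routing $M_1,M_2$ so as to avoid those two edges — is what forces a detailed case analysis, broken down by how many precolored edges lie in $M$, whether both copies are met, and which colors repeat, entirely in the spirit of the proofs of Lemma~\ref{cl:K5} and Theorem~\ref{prop:complete}.
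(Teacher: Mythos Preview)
Your overall plan—splitting by where the precolored edges lie, planting auxiliary matchings $M_1,M_2$ to shadow the precolored $M$-edges, and reassembling—is exactly the paper's strategy, and your handling of the case where each of $G_1,G_2,M$ carries at least one precolored edge is essentially the paper's Case~3 and is correct as written.

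The obstacle you flag, however, is real for \emph{your} approach, and the cure is not a long case analysis in the style of Lemma~\ref{cl:K5}. The problem is which color you reserve for $M_1$. You insist on recoloring $M_1\cup M_2$ to the \emph{new} color $\chi'(G)+1$; once your auxiliary lemma has already spent that color on up to two edges of $G_1$, you have no control over where those two edges sit relative to $M_1$, and the recoloring can fail. The paper never invokes a $(\chi'(G)+1)$-coloring of a single copy at all. In the cases where one copy carries no precolored edge (the paper's Cases~2 and~4), it fixes a color $c$ appearing in $\varphi_1$, deletes \emph{all} $\varphi_1$-edges of color $c$ (this drops the count to at most $k$, not merely $k$ after removing one edge), and extends by hypothesis using the $\chi'(G)$ colors $\{1,\dots,\chi'(G)+1\}\setminus\{c\}$. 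Now color $c$ is globally absent from the extension of $G_1$, so one can recolor the relevant $M_1$-edges to $c$—not to $\chi'(G)+1$—with no possible collision, restore $c$ on the $\varphi$-precolored edges, copy the whole picture to $G_2$, and fill in $M$. Your auxiliary lemma is correct and pleasant, but it is exactly what creates the reassembly difficulty; the ``shift the palette off one color'' device replaces it and makes the four cases go through uniformly, with no further subdivision.
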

\begin{proof}
	Without loss of generality, we assume that 
	$k+1$ edges of $G \square K_2$
	are precolored. We denote this precoloring by $\varphi$, 
	by $G_1$ and $G_2$ the copies of $G$ in $G \square K_2$, respectively,
	and by $M$ the perfect matching between $G_1$ and $G_2$.
	We shall consider some different cases.
	
	\bigskip
	{\bf Case 1.} {\em All precolored edges appear in $E(G_1) \cup E(G_2)$:}
		
		If both $G_1$ and $G_2$ contains at least one precolored edge,
		then by assumption, both the restriction of $\varphi$ to $G_1$
		and to $G_2$ are extendable to proper $\chi'(G)$-edge colorings
		of $G_1$ and $G_2$, respectively. By coloring all edges of $M$
		by color $\chi'(G)+1$, we obtain an extension of $\varphi$.
		
		Suppose now that all precolored edges appear in one of $G_1$ and 
		$G_2$, $G_1$ say. Without loss of generality, assume that
		color $1$ appears on at least one edge in $G_1$.
		By removing the color $1$ from any edge precolored $1$,
		we obtain a partial edge coloring of $G_1$ that is extendable
		to a proper edge coloring of $G_1$ using colors 
		$2,\dots, \chi'(G)+1$. By recoloring all edges precolored $1$
		under $\varphi$ by the color $1$, coloring $G_2$ correspondingly
		and then coloring all edges of $M$ by a color missing at its
		endpoints, we obtain an extension of $\varphi$.
	
	\bigskip
	{\bf Case 2.} {\em All precolored edges are in $E(G_1) \cup M$:}
	
	We assume that at least one precolored edge is in $M$.
	Let $E_M$ be the set of all precolored edges in $M$.
	We shall select a matching $M_1$ of $|E_M|$ uncolored
	edges in $G_1$ satisfying the following:
	
	\begin{itemize}
	
		\item[(i)] every edge of $E_M$ is adjacent to exactly
		one edge of $M_1$;
		
		\item[(ii)] every edge of $M_1$ is adjacent to exactly one precolored
		edge of $G$.
	
	\end{itemize}

	Since $G_1$ is regular and triangle-free, $G \square K_2$ contains
	at most $\Delta(G_1)$ precolored edges and all those precolored
	edges are independent, it is straightforward to verify that there
	is a set $M_1 \subseteq E(G_1)$ satisfying (i)-(ii); indeed,
	since every vertex of $G_1$ has degree $\Delta(G_1)$,
	we can simply select edges adjacent to the precolored edges of
	$M$ greedily.
	
	Now, by coloring all edges of $M_1$ by the color of the adjacent edge
	in $M$, and taking this coloring together with the restriction
	of $\varphi$ to $G_1$, we obtain a precoloring $\varphi_1$
	of $G_1$ with $k+1$ precolored independent edges.
	Without loss of generality we assume that some edge is colored $1$
	under $\varphi_1$. By removing the color $1$ from every such
	edge $\varphi_1$-precolored $1$, we obtain a precoloring that is extendable to
	a proper edge coloring of $G_1$ using colors $2, \dots, \chi'(G)+1$.
	
	Next, for every edge of $G_1$ that is
	$\varphi$-precolored $1$, we recolor this edge by $1$. Similarly, for every
	$\varphi_1$-precolored edge $e$ of $M_1$ such that $\varphi_1(e) \neq 1$,
	we recolor $e$ by the color $1$ and the adjacent edge in $M$ by the
	color $\varphi_1(e)$. Finally, we recolor any edge of $M$ that is 
	$\varphi$-precolored $1$ by the color $1$.
	Since all $\varphi_1$-precolored edges are independent, the resulting
	partial edge coloring of $G$ is proper. 
	By coloring $G_2$ correspondingly
	and coloring all uncolored edges of $M$ by a color
	missing at its endpoints, we obtain a proper edge coloring that is an extension of
	$\varphi$.

	\bigskip
	{\bf Case 3.} {\it $E(G_1), E(G_2)$ and $M$ contains at least one
	precolored edge each:}
	
	Let $\varphi_1$ and $\varphi_2$ be the restrictions
	of $\varphi$ to $G_1$ and $G_2$, respectively.
	As in the preceding case we shall select a matching of $|E_M|$ uncolored edges 
	$M_1 \subseteq E(G_1)$ and a matching 
	$M_2 \subseteq E(G_2)$ of uncolored corresponding edges satisfying the following:
	
	\begin{itemize}
	
		\item[(i)] every edge of $E_M$ is adjacent to exactly
		one edge of $M_i$;
		
		\item[(ii)] every edge of $M_1 \cup M_2$ is adjacent to exactly one 
		precolored edge of $G$.
	
	\end{itemize}
	
	The existence of such sets $M_1$ and $M_2$ follows as in Case 2,
	since $G_1$ and $G_2$ are $\Delta(G)$-regular and $G \square K_2$
	contains altogether at most $\Delta(G)$ precolored edges.
	
	Now, consider the edge precolorings obtained from $\varphi_1$ and
	$\varphi_2$, respectively, by coloring every edge of $M_1$ and $M_2$
	by the color of the adjacent precolored edge of $M$.
	Since $G_1$ and $G_2$ both contains at least one $\varphi$-precolored edge,
	the obtained precolorings $\varphi'_1$ and $\varphi'_2$, respectively,
	are by assumption extendable to proper $\chi'(G)$-edge colorings.
	Now, we recolor every edge of $M_1 \cup M_2$ by the color $\chi'(G)+1$,
	and then color every $\varphi$-colored edge of $M$ by its 
	color under $\varphi$, and by any color not appearing at its
	endpoints if it is not $\varphi$-colored.
	Since $M_i$ is a matching, the resulting coloring is proper, and
	thus also an extension of $\varphi$.

	\bigskip
	{\bf Case 4.} {\it All precolored edges are in $M$:}
	
	Without loss of generality, assume that at least one edge of $M$
	is colored $1$. Let $E_1$ be the set of precolored edges of $M$
	that are not colored $1$.
	Since $G$ is $\Delta(G)$-regular and triangle-free, 
	there is a matching $M_1 \subseteq E(G_1)$
	of $|E_1|$ edges such that
	every edge of $E_1$ is adjacent to exactly
	one edge of $M$, and every edge of $M_1$ is adjacent to 
	at most one edge of $E_1$ and to no
	edge of $M$ precolored $1$.

	As before, we color all edges of $M_1$ by its adjacent precolored edge
	of $M$. The resulting partial coloring of $G_1$ is extendable to
	a proper edge coloring of $G_1$ using colors $2,\dots, \chi'(G)+1$.
	Next, we recolor every edge of $M_1$ by the color $1$, color $G_2$
	correspondingly and finally color all edges of $M$ by a color missing
	at its endpoints so that the resulting coloring is proper and agrees with $\varphi$.
\end{proof}

For graphs with triangles we have the following variant
of Theorem \ref{th:main}.

\begin{theorem}
\label{th:maintri}
	If $G$ is a regular graph where every precoloring 
	of at most $k < \Delta(G)/2$
	independent edges
	is extendable to a $\chi'(G)$-edge coloring, then every precoloring of
	at most $k+1$ independent edges in 
	$G \square K_2$ is extendable
	to a $(\chi'(G)+1)$-edge coloring.
\end{theorem}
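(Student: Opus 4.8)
The plan is to follow the proof of Theorem \ref{th:main} essentially line by line; the only modification needed is in the justification of the greedy construction of the auxiliary matchings, and the bound $k < \Delta(G)/2$ is precisely what is required to make that construction go through in the presence of triangles.

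As in that proof, I would write $G_1, G_2$ for the two copies of $G$ in $G \square K_2$ and $M$ for the perfect matching joining them, assume that $k+1$ independent edges are precolored by $\varphi$, and note that $k < \Delta(G)/2$ gives $2k \le \Delta(G) - 1$. I would then split into the same four cases: (1) all precolored edges lie in $E(G_1) \cup E(G_2)$; (2) all precolored edges lie in $E(G_1) \cup M$, with at least one in $M$; (3) each of $E(G_1), E(G_2), M$ contains a precolored edge; (4) all precolored edges lie in $M$. Case 1 does not involve $M$ at all, uses only the extension hypothesis for $G$, and carries over verbatim.

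In each of Cases 2--4 the only step where a structural property of $G$ is used (beyond the extension hypothesis) is the existence of a matching $M_1 \subseteq E(G_1)$ --- and, in Case 3, a matching $M_2 \subseteq E(G_2)$ of the corresponding edges --- consisting of uncolored edges, one adjacent to each relevant precolored edge of $M$, such that every chosen edge is adjacent to exactly one precolored edge of $G \square K_2$, namely its parent in $M$. I would build $M_1$ greedily, treating these precolored edges of $M$ one at a time: for a precolored edge $u_1u_2 \in M$ with $u_1 \in V(G_1)$, I must pick a neighbour $w_1$ of $u_1$ in $G_1$ that is not an endpoint of any precolored edge of $G_1$, not a $V(G_1)$-endpoint of a precolored edge of $M$ other than $u_1u_2$, and not an endpoint of an edge of $M_1$ chosen so far (in Case 3, additionally, the copy $w_2$ of $w_1$ must not be an endpoint of a precolored edge of $G_2$). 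Since the precolored edges are independent, $u_1$ is incident to no precolored edge of $G_1$ and to only the edge $u_1u_2$ of $M$; writing $q_1, q_2$ for the numbers of precolored edges in $E(G_1), E(G_2)$ and $m$ for the number in $M$, a direct count shows that the number of forbidden neighbours of $u_1$ is at most $2q_1 + 2q_2 + 2m - 2 = 2(q_1+q_2+m) - 2 = 2k < \Delta(G)$, so a valid choice of $w_1$ always remains and the required matchings exist. (The bookkeeping in Case 4, where only edges of $M$ are precolored and one colour is set aside, is analogous and gives an even smaller bound.) Having produced $M_1$ (and $M_2$), the remainder of each case --- colouring the edges of $M_i$ by the colours of their parents to obtain a precoloring of $G_i$ with at most $k$ independent precolored edges, extending it to a proper $\chi'(G)$-edge colouring by the hypothesis on $G$, recolouring the edges of $M_1 \cup M_2$ by colour $\chi'(G)+1$ (respectively by the distinguished colour in Case 4), and finally colouring the remaining edges of $M$ by colours missing at their endpoints --- is carried out exactly as in the proof of Theorem \ref{th:main}, and its correctness is unaffected by triangles.

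The only real obstacle is this counting step, and it is exactly here that Theorem \ref{th:main} exploited triangle-freeness: in a triangle-free graph a precolored edge $xy$ of $G_1$ contributes at most one forbidden neighbour of $u_1$, since $u_1$ cannot be adjacent to both $x$ and $y$, so the bound on forbidden vertices is roughly halved and the greedy selection already succeeds for $k < \Delta(G)$. Dropping triangle-freeness forces the weaker bound $k < \Delta(G)/2$, and this is the sole reason for the change in hypothesis; every other ingredient of the argument is identical.
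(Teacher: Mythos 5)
Your proposal is correct and follows exactly the route the paper intends: the paper itself only remarks that the proof is ``virtually identical'' to that of Theorem~\ref{th:main}, with the sole difference being that the bound $k<\Delta(G)/2$ is needed so the greedy selection of the auxiliary matchings $M_1$ (and $M_2$) still succeeds when triangles may cause a single precolored edge to forbid two neighbours of $u_1$ instead of one. Your explicit count of at most $2(q_1+q_2+m)-2=2k<\Delta(G)$ forbidden neighbours supplies precisely the detail the paper omits.
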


The proof of this theorem is virtually identical to the proof of the preceding
one. The only essential difference is that when $G$ is not triangle-free
we have to assume that at most $\Delta(G) /2$ edges are precolored
to be able to ensure that we can select independent edges in $G_1$ and $G_2$
that are adjacent to precolored edges of $M$ and also not adjacent
to any precolored edges in $G_1$ and $G_2$ (Cases 2-4 in the proof of
Theorem \ref{th:main}). We omit the details.

\section{Cartesian products with subcubic graphs}

In this section, we consider Conjecture \ref{conj:general} for subcubic graphs,
that is, graphs with maximum degree at most $3$.
First we prove that Conjecture \ref{conj:general} holds for graphs with maximum
degree two. The case of paths was considered above, so it suffices to consider cycles.
We shall need some well-known auxiliary results on list edge coloring.

\begin{lemma}
	For every path $P$, if one edge $e \in E(P)$ has a list of size 
	at least $1$ and all other 	
	edges have lists of at least $2$ colors, 
	then $P$ has a proper edge coloring using colors from the lists.
\label{Pathlemma}
\end{lemma}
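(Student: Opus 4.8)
The plan is to prove this by a straightforward greedy argument, processing the edges of the path in a linear order that starts at the special edge $e$. Write $P = v_0 v_1 v_2 \cdots v_m$, and suppose first that $e = v_0 v_1$ is an endpoint-incident edge; the general case will be reduced to this one. I would order the edges as $e_1 = v_0 v_1, e_2 = v_1 v_2, \ldots, e_m = v_{m-1} v_m$, so that each $e_i$ with $i \geq 2$ shares exactly one vertex (namely $v_{i-1}$) with its predecessor $e_{i-1}$, and is otherwise independent of $e_1, \ldots, e_{i-2}$. Now color greedily: pick any color from the list $L(e_1)$, which is nonempty by hypothesis; then for each $i = 2, \ldots, m$, the only previously colored edge adjacent to $e_i$ is $e_{i-1}$, so at most one color is forbidden at $e_i$; since $|L(e_i)| \geq 2$, there remains an available color. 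This yields a proper edge coloring of $P$ from the lists.

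For the case where the special edge $e = v_j v_{j+1}$ lies in the interior of the path, I would split $P$ at $e$ into two subpaths $P' = v_0 \cdots v_{j+1}$ and $P'' = v_j \cdots v_m$, each containing $e$ as an endpoint-incident edge. First color $e$ with any color from $L(e)$ (nonempty by hypothesis). Then apply the endpoint case to $P'$ (where $e$ now has a list of size $1$, namely the chosen color, and all other edges have lists of size at least $2$), and independently apply the endpoint case to $P''$. Since $P'$ and $P''$ share only the edge $e$ and the two colorings agree on $e$, their union is a proper edge coloring of $P$ from the lists. Trivial degenerate cases ($P$ a single edge, or $e$ already at an end, or $P$ empty) are handled directly.

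I do not anticipate a genuine obstacle here; this is a standard list-coloring fact and the only thing to be careful about is the bookkeeping of which edge plays the role of the "size-$1$" edge after splitting, and the observation that in the chosen order each new edge is adjacent to exactly one already-colored edge. If one prefers to avoid the case split entirely, an alternative is to root the ordering at $e$ and process outward in both directions simultaneously, but the two-subpath reduction above is cleaner to write. I would present the endpoint case as the main argument and dispose of the interior case in one sentence.
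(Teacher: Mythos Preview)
Your proposal is correct; the greedy argument along the path, rooted at the special edge $e$, is the standard way to establish this fact, and your handling of the interior case via splitting into two subpaths is clean and sound. Note that the paper itself does not supply a proof of this lemma: it introduces Lemma~\ref{Pathlemma} (together with the companion Lemma~\ref{claimcycle}) as a ``well-known auxiliary result on list edge coloring'' and states it without argument, so your write-up simply fills in what the paper leaves to the reader.
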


\begin{lemma}
	If $L$ is a list assignment for the edges of 
	a cycle $C$, where every list has size at least two 	
	and not all edges have the same list, then $C$ has a proper edge coloring 
	using colors from the lists.
\label{claimcycle}
\end{lemma}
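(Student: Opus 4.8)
The plan is to reduce the statement to Lemma~\ref{Pathlemma} by first colouring one carefully chosen edge of the cycle and then treating what remains as a path.

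Write $C$ as the cycle with edges $e_1,e_2,\dots,e_n$ listed in cyclic order, so that $e_i$ and $e_{i+1}$ share a vertex for every $i$ (indices modulo $n$). Since not all lists coincide, while walking around the cycle the list must change between some two consecutive edges; hence there is an $i$ with $L(e_i)\neq L(e_{i+1})$. For such a pair the symmetric difference $L(e_i)\,\triangle\,L(e_{i+1})$ is nonempty, so after relabelling the edges (possibly reversing the cyclic orientation and shifting indices) we may assume that $L(e_1)\neq L(e_2)$ and that there is a colour $c\in L(e_2)\setminus L(e_1)$. We colour $e_2$ with $c$.

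It then remains to colour the edges $e_3,e_4,\dots,e_n,e_1$ so that consecutive edges in this list receive distinct colours and, in addition, $e_3$ and $e_1$ receive colours different from $c$ — these being the only further adjacencies in $C$ that involve the already-coloured edge $e_2$. Now $c\notin L(e_1)$, so the restriction on $e_1$ is automatic, while for $e_3$ we simply delete $c$ from its list, leaving a list of size at least $1$. Introduce an auxiliary path $P$ with edges $f_1,f_2,\dots,f_{n-1}$ in path order, where $f_i$ corresponds to $e_{i+2}$ for $1\le i\le n-2$ and $f_{n-1}$ corresponds to $e_1$; assign the list $L(e_3)\setminus\{c\}$ to $f_1$, the list $L(e_1)$ to $f_{n-1}$, and the list $L(e_{i+2})$ to $f_i$ for $2\le i\le n-2$. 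A proper edge colouring of $P$ from these lists is exactly a colouring of $e_3,\dots,e_n,e_1$ of the required form.

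Since $f_1$ has a list of size at least $1$ and every other edge of $P$ has a list of size at least $2$, Lemma~\ref{Pathlemma} provides such a colouring of $P$; combined with the colour $c$ on $e_2$, this gives a proper edge colouring of $C$ from the lists. (For $n=3$ the path $P$ has just the two edges $f_1\leftrightarrow e_3$ and $f_2\leftrightarrow e_1$, and the argument is unchanged.) The only step that genuinely needs care is the opening reduction — selecting the two consecutive edges with distinct lists and orienting the cycle so that the chosen colour $c$ lies outside $L(e_1)$; after that the argument is just the greedy extension packaged in Lemma~\ref{Pathlemma}.
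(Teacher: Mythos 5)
Your proof is correct. The paper actually states Lemma~\ref{claimcycle} without proof, citing it as a well-known auxiliary fact, so there is no in-paper argument to compare against; your reduction --- pick two consecutive edges with distinct lists, colour one of them with a colour absent from its neighbour's list, delete that colour from the list of its other neighbour, and finish greedily via Lemma~\ref{Pathlemma} on the remaining path --- is exactly the standard argument for this statement, and all the adjacency constraints (including the triangle case $n=3$) are accounted for.
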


\begin{proposition}
\label{even_cycles}
\textit{Let $n \geq 2$ be a positive integer. If $\varphi$ is an edge precoloring of two edges in $C_{2n} \square K_2$, then $\varphi$ can be extended to a proper $3$-edge coloring of $C_{2n} \square K_2$.}
\end{proposition}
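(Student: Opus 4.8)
The goal is to extend an edge precoloring $\varphi$ of two edges in $C_{2n} \square K_2$ to a proper $3$-edge coloring. Note first that $C_{2n} \square K_2$ is a cubic bipartite graph, hence Class $1$, so a proper $3$-edge coloring exists; the content is making it agree with $\varphi$. Write $M$ for the perfect matching joining the two copies $C^1$ and $C^2$ of $C_{2n}$, so that $C_{2n} \square K_2 - M$ is the disjoint union of $C^1$ and $C^2$. I would split into cases according to how the two precolored edges, call them $e$ and $f$, are distributed among $E(C^1)$, $E(C^2)$, and $M$.

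\textbf{Case A: no precolored edge lies in $M$.} Then each of $C^1, C^2$ carries at most two precolored edges, but together at most two. If all precolored edges lie in one copy, say $C^1$, color $M$ entirely with color $3$; we then need a proper $\{1,2\}$-edge coloring of $C^1$ extending $\varphi$ and (correspondingly copied) of $C^2$. A cycle $C_{2n}$ has exactly two proper $2$-edge colorings, so two precolored edges are extendable iff they are "consistent" (same color at even distance, different at odd distance). If they are inconsistent, I instead use color $3$ on a well-chosen single edge of $M$ to break one of $C^1$'s cycles into a path and gain freedom — more robustly, I would assign lists: give each edge of $C^1$ a list of the two non-forbidden colors, noting the precolored edges force lists of size $1$; by Lemma~\ref{Pathlemma} applied to the path obtained after deleting/using one $M$-edge of color $3$, a proper coloring exists. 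Then copy to $C^2$ and color the remaining $M$-edges greedily (each uncolored $M$-edge sees at most two colors among its four neighboring edges in the two copies, but we must be careful — this is where the argument needs attention). If the two precolored edges lie one in $C^1$ and one in $C^2$, similarly color $M$ with $3$ and independently extend each copy.

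\textbf{Case B: exactly one precolored edge lies in $M$.} Say $e = u_1u_2 \in M$ with $\varphi(e) = 3$ (relabel colors; if $\varphi(e) \in \{1,2\}$ a symmetric argument with a different "bulk" color on $M$ applies), and $f$ lies in, say, $C^1$. I would color all of $M$ with color $3$ except possibly adjust near $f$, then extend $\varphi$ on $C^1$ (a single precolored edge in a cycle always extends to a proper $2$-coloring) and copy to $C^2$. If $\varphi(e) \in \{1,2\}$, say $\varphi(e)=1$: let $M_1$ be a single uncolored edge of $C^1$ adjacent to $u_1$, not adjacent to $f$ if $f$ is colored $1$; color $M_1$ with $1$ (mimicking the recoloring trick in the proof of Theorem~\ref{th:main}), extend $C^1 - M_1$ by colors $\{2,3\}$, then recolor $M_1$ back to... — more cleanly, I would use Lemma~\ref{claimcycle}/Lemma~\ref{Pathlemma} with appropriate lists on $C^1$ after committing a color to $e$.

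\textbf{Case C: both precolored edges lie in $M$.} Write $e = u_1u_2$, $f = v_1v_2$ in $M$. If $\varphi(e) = \varphi(f) = c$, color all of $M$ with $c$ (consistent), then extend the two copies using the other two colors via Lemma~\ref{claimcycle} (each copy $C_{2n}$ with no precolored edges is freely $2$-colorable). If $\varphi(e) = c_1 \neq c_2 = \varphi(f)$: pick the third color $c_3$; I would color the $M$-edges other than $e,f$ with $c_3$ where possible and near $e,f$ arrange colors on the incident edges of $C^1$ (copied to $C^2$) so that the constraints propagate consistently; concretely, commit $\varphi$ to $e,f$, give every edge of $C^1$ its list of admissible colors (size $2$ for most, possibly forced smaller next to $u_1,v_1$), apply the path/cycle list lemmas to find a proper coloring of $C^1$, copy to $C^2$, and finish $M$ greedily.

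\textbf{Main obstacle.} The delicate point throughout is the final step of coloring the uncolored edges of $M$: an $M$-edge $w_1w_2$ is adjacent to its two neighbors in $C^1$ and two in $C^2$ (which carry matching colors), so it sees at most two distinct colors and there is always a free color — but we must ensure that after copying $C^1$ to $C^2$ the precolored $M$-edges (in Cases B, C) still have their prescribed color available, i.e. that neither endpoint's incident cycle-edges were colored with the forbidden color. This forces the careful choice of the auxiliary matching $M_1$ (or the list sizes) on $C^1$ in exactly the manner of the proofs of Theorems~\ref{th:tree} and \ref{th:main}. Handling the inconsistent two-precolored-edges subcase of Case A — where a naive $2$-coloring of a copy fails — by sacrificing one $M$-edge to color $3$ and invoking Lemma~\ref{Pathlemma} is the other place where one must check the details, but since $C_{2n}$ is even this always works. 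Overall the proposition follows by a finite case check leaning on Lemmas~\ref{Pathlemma} and \ref{claimcycle} together with the Class $1$ property of $C_{2n}\square K_2$.
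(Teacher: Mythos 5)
Your proposal follows essentially the same route as the paper's proof: split according to how the two precolored edges are distributed over $E(C^1_{2n})$, $E(C^2_{2n})$ and $M$, color one cycle from suitable lists via Lemmas~\ref{Pathlemma} and~\ref{claimcycle} so that the color of any precolored $M$-edge remains available at its endpoints, copy the coloring to the other cycle, and finish $M$ with the unique missing color at each vertex pair. The plan is sound and the delicate points you flag are exactly the ones the paper also handles (tersely); the only local slip is the phrase in Case~A about putting color $3$ on an edge of $M$ to ``break $C^1$ into a path'' (it should be an edge of $C^1$, and is in any case superseded by the list argument you then give).
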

\begin{proof}
Let $M$ be matching of $C_{2n} \square K_2$, so that $C_{2n} \square K_2 -M$ consists
of two copies $C^1_{2n}$ and $C^2_{2n}$ of $C_{2n}$.
The cases when no precolored edge is contained in the matching $M$ can be dealt with as
above. If $M$ contains exactly one precolored edge, then if the two precolored edges have
distinct colors, then $\varphi$ is trivially extendable.
If, on the other hand, the two precolored have the same color, say $1$, then we can properly
color the copy of $C_{2n}$ containing the other precolored edge by colors $2$ and $3$;
so $\varphi$ is extendable.

It remains to consider the case when no precolored edges are in $C^1_{2n}$ or $C^2_{2n}$. 
If the two precolored edges have the same color, then $\varphi$ is trivially extendable.
If they have different colors and have distance at least $2$, then $\varphi$ is extendable
by Lemma \ref{claimcycle}. It is straightforward to verify that $\varphi$ is
extendable when the precolored edges are at distance $1$.
\end{proof}

Note that Proposition \ref{even_cycles} does not hold for odd cycles. 
For instance, consider the cartesian product $C_{2n+1} \square K_2$
where two corresponding edges of the two copies of $C_{2n+1}$ are colored
by $1$ and $2$, respectively. If this precoloring is extendable
to a proper $3$-edge coloring of $C_{2n+1} \square K_2$, then every edge
in the matching $M$ joining vertices of the copies of $C_{2n+1}$
must be colored $3$. Hence, the precoloring is not extendable.

Nevertheless, for odd cycles, we have the following analogue of
Proposition~\ref{even_cycles}.

\begin{proposition}
\label{odd cycles}
\textit{Let $n \geq 1$ be a positive integer. If $\varphi$ is a  edge precoloring of three 
edges in $C_{2n+1} \square K_2$, then $\varphi$ can be extended to a proper $4$-edge coloring of $G$.}
\end{proposition}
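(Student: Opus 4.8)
The plan is to follow the same case-analysis strategy used for even cycles in Proposition~\ref{even_cycles}, splitting according to how many precolored edges lie in the matching $M$ joining the two copies $C^1_{2n+1}$ and $C^2_{2n+1}$ of $C_{2n+1}$. Write $C=C_{2n+1}$, so $\chi'(C)=3$ and we want a proper $4$-edge coloring of $C\square K_2$. Recall that $C\square K_2$ is $3$-regular, so at every vertex exactly one color from $\{1,2,3,4\}$ is missing in any proper $4$-edge coloring; this ``free color'' observation will be used repeatedly when coloring edges of $M$ last.

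First I would dispose of the case where $M$ contains no precolored edge. Then all three precolored edges lie in $C^1\cup C^2$; say $C^1$ carries $a$ of them and $C^2$ carries $3-a$. If one of the copies, say $C^2$, is uncolored, then $C^1$ has at most $3$ precolored edges and we extend the restriction of $\varphi$ to a proper $4$-edge coloring of $C^1$ — this is possible since $C^1$ is a cycle and we have four colors available (one can e.g. delete a color from one precolored edge as in Lemma~\ref{cl:K5}, or argue directly via Lemma~\ref{claimcycle} applied to a suitable list assignment), color $C^2$ correspondingly, and finally color every edge of $M$ by a color missing at both its endpoints. If instead both $C^1$ and $C^2$ contain at least one precolored edge, each contains at most two, and each restriction extends to a proper $3$-edge coloring of the respective odd cycle (by Proposition~\ref{prop:alwaysextend}, since an odd cycle has the property that any partial $3$-edge coloring — here at most two edges — is extendable); then color all of $M$ with color $4$.

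Next, suppose exactly one edge $e$ of $M$ is precolored, and let the other two precolored edges be $e',e''$. The key point is that $C^1\cup C^2$ now carries at most two precolored edges, distributed among the two odd cycles; whichever copy contains one or both of them extends to a proper $3$-edge coloring using $\{1,2,3\}$ by Proposition~\ref{prop:alwaysextend}, and the other copy is colored correspondingly, leaving all of $M$ except $e$ uncolored and all of $C^1\cup C^2$ colored with $\{1,2,3\}$. Then color $e$ with $\varphi(e)$ and every other edge of $M$ with a color from $\{1,2,3,4\}$ missing at both endpoints; the only constraint to verify is at the endpoints of $e$, where we must check $\varphi(e)$ is not already used — this holds because on the copy containing an endpoint of $e$ we are free to choose an extension avoiding $\varphi(e)$ at that vertex (an odd cycle precoloring with $\le 2$ edges has enough freedom; if it does not, remove color $\varphi(e)$ from a precolored edge and re-extend using the fourth color, exactly as in the complete-graph arguments). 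If $M$ contains two or three precolored edges, at most one or zero precolored edges remain in $C^1\cup C^2$; I would select a small matching $M_1\subseteq E(C^1)$ of uncolored edges, one adjacent to each precolored edge of $M$, color each such edge by the color of its adjacent $M$-edge, note the resulting precoloring of the odd cycle $C^1$ has at most $3$ edges and extends using four colors, color $C^2$ correspondingly, recolor the edges of $M_1$ with a color not creating conflicts (or shift colors between $M_1$ and $M$ as in the proof of Theorem~\ref{th:main}, Case~2), and finish $M$ greedily.

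The main obstacle is the bookkeeping in the subcases where $M$ contains a precolored edge $e$ and we must guarantee that after coloring each odd cycle copy with only three colors, the color $\varphi(e)$ is missing at at least one endpoint of $e$ — otherwise $e$ cannot keep its color and $M$ cannot be completed. Overcoming this requires, as in Lemma~\ref{cl:K5}, either choosing the three-coloring of the relevant $C^i$ carefully (possible because a cycle on an odd number of vertices with few precolored edges has several proper $3$-colorings), or temporarily demoting $\varphi(e)$ to the fourth color $4$ on an adjacent auxiliary edge and re-coloring. Since $C_{2n+1}$ is small and flexible, each such subcase is routine once set up correctly, so I would present the case $M$ has no precolored edge in full and then indicate that the remaining cases follow by the same auxiliary-matching technique already used in Proposition~\ref{even_cycles} and Theorem~\ref{th:main}, verifying only the endpoint condition on $e$ explicitly.
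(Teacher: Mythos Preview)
Your overall case split (by how many precolored edges lie in $M$) matches the paper's, and the case $M\cap\{\text{precolored edges}\}=\emptyset$ is fine. The difficulty is entirely in the cases where $M$ carries precolored edges, and here your argument has a real gap.

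The crux is your repeated assertion that, after fixing at most two precolored edges in an odd cycle $C^i_{2n+1}$, you can choose a proper $3$-edge coloring of that cycle which \emph{additionally} avoids the color $\varphi(e)$ at the endpoint of the precolored $M$-edge $e$. Proposition~\ref{prop:alwaysextend} only gives you \emph{some} extension; it says nothing about controlling which color is missing at a prescribed vertex. Your fallback (``remove color $\varphi(e)$ from a precolored edge and re-extend using the fourth color'') does not repair this: the precolored edge need not have color $\varphi(e)$, and once you use four colors on one copy you lose the option of coloring the other copy identically and still respecting its own precolored edge. The paper handles exactly this point by passing to a \emph{list} assignment on $C^i_{2n+1}$ (lists from $\{1,2,3\}$ or $\{1,2,3,4\}$, with the colors of adjacent precolored edges in $M$ removed) and then invoking Lemmas~\ref{Pathlemma} and~\ref{claimcycle}; that is the missing ingredient in your sketch.

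Your auxiliary-matching plan for the case $|M\cap\{\text{precolored}\}|\ge 2$ also breaks down for small $n$: in $C_3$, two precolored $M$-edges at adjacent vertices $u_1,v_1$ admit no matching $M_1\subseteq E(C^1_3)$ of two disjoint uncolored edges covering both (every pair of edges in a triangle meets). The paper sidesteps this entirely: when all three precolored edges lie in $M$ it applies Lemma~\ref{claimcycle} directly to the induced list assignment on $C^1_{2n+1}$, and when exactly two lie in $M$ it uses Lemma~\ref{Pathlemma}; it also treats $n=1$ separately in the one-edge-in-$M$ subcase. To turn your outline into a proof you would need to either reproduce those list-coloring arguments or supply a genuinely different mechanism for forcing the correct color to be missing at the endpoints of precolored $M$-edges.
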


\begin{proof}
Let $M$ be a perfect matching in $C_{2n+1} \square K_2$ such that 
$C_{2n+1} \square K_2 - M$ is isomorphic to two copies $C^1_{2n+1}$ and $C^2_{2n+1}$ 
of $C_{2n+1}$. 
Without loss of generality we assume that the precoloring of $G$ uses colors $1, 2, 3$. 
We shall consider some different cases. Again, we omit the details in the
case when $M$ contains no precolored edges. \\

\noindent \textbf{Case 1.} \textit{$M$ contains exactly one precolored edge:}

For any uncolored edge $e \in E(C_{2n+1} \square K_2)$, 
we define a color list $l(e) \subseteq \{1,2,3,4\}$ by setting:

$$l(e) = \lbrace 1, 2, 3, 4 \rbrace\setminus \lbrace \varphi(e'):  e' \text{ is adjacent to } e   \rbrace.$$

If no precolored edges are in $C^2_{2n+1}$, then at most one uncolored edge 
of $C^1_{2n+1}$ is adjacent to precolored edges 
of three distinct colors, which implies that at most one edge $e$ 
has a list $l(e)$ of size $1$ and all other edges of $C^1_{2n+1}$ have lists of size at least $2$. By Lemma~\ref{Pathlemma}, there is a proper edge coloring $\varphi_1$ of $C^1_{2n+1}$,  which is an extension of the restriction of $\varphi$ to $C^1_{2n+1}$. Hence, we obtain an extension of $\varphi$ by coloring every edge of $C^2_{2n+1}$ by the color of its corresponding edge in $C^1_{2n+1}$, and then coloring every edge of $M$ with some color in $\lbrace 1, 2, 3, 4 \rbrace$ that is missing at its endpoints.

Suppose now that both $C^1_{2n+1}$ and $C^2_{2n+1}$ each contains exactly one precolored edge. Let $e_1 \in E(C^1_{2n+1})$, $e_2 \in M$, and $e_3 \in E(C^2_{2n+1})$ 
be the precolored edges of $C^1_{2n+1}$, $M$ and $C^2_{2n+1}$, respectively. 

\bigskip

First we treat the case when $n=1$;  it needs to be considered separately.

If $e_1$ and $e_2$, and $e_2$ and $e_3$ are adjacent, then $\varphi$
is extendable since $\varphi(e_2) \notin \{\varphi(e_1), \varphi(e_3)\}$;
we can e.g. first properly color $C^1_{3}$ and $C^2_{3}$ using colors from 
$\{1,2,3,4\} \setminus \{\varphi(e_2)\}$, and then color the remaining uncolored
edges of $M$.
If exactly two of the edges $e_1,e_2,e_3$ are pairwise adjacent,
say $e_1$ and $e_2$, and $\varphi(e_3) = \varphi(e_2)$, then
we can instead use colors $\varphi(e_1)$, $\varphi(e_2)$ and one additional color
from $\{1,2,3,4\}$ for coloring $C^1_{3}$ and $C^2_{3}$;
if $\varphi(e_3) \neq \varphi(e_2)$, then we proceed similarly using colors
$\{1,2,3,4\} \setminus \{\varphi(e_2)\}$.

Suppose now that all the edges $e_1, e_2, e_3$ are pairwise nonadjacent.
If at most two colors, say $c_1$ and $c_2$, appear on the precolored edges,
then we first color the uncolored edges of $C^1_{3}$ and $C^2_{3}$
using colors $\{1,2,3,4\} \setminus \{c_1,c_2\}$, and then color the remaining
uncolored edges of $M$. If, on the other hand, three colors appear
on the precolored edges, then we properly color $C^1_{3}$ and $C^2_{3}$
using colors $\{1,2,3,4\} \setminus \{\varphi(e_2)\}$. \\

Now we treat the case when $n\geq2$. We define a list 
assignment $l$ for $C^1_{2n+1}$ by setting
$$l(e) = \{ 1, 2, 3 \} \setminus 
\{\varphi(e'):  e' \in E(C_{2n+1} \square K_2) \text{ is adjacent to } e   \}.$$
 
Since at most one edge is adjacent to both $e_1$ and $e_2$, at most one uncolored edge 
$e$ satisfies that $|l(e)| =1$, and all other edges of $C^1_{2n+1}$ have lists of size at least two. By Lemma~\ref{Pathlemma}, there is a proper edge coloring $\varphi_1$ of $C^1_{2n+1}$ using colors $1, 2, 3$ which is an extension of the restriction of $\varphi$ to $C^1_{2n+1}$.  Arguing similarly, we can define a proper edge coloring $\varphi_2$ of $C^2_{2n+1}$ using colors $1, 2, 3$ which is an extension of the restriction of $\varphi$ to $C^2_{2n+1}$. Finally, we obtain an extension of $\varphi$ by coloring every uncolored edge of $M$ with color $4$. \\

\noindent \textbf{Case 2.} \textit{$M$ contains at least two precolored edges:}

For the uncolored edges of $C_{2n+1} \square K_2$, we define a list assignment $l$
by setting $$l(e) = \{1,2,3,4\} \setminus 
\{\varphi(e'):  e' \in E(C_{2n+1} \square K_2) \text{ is adjacent to } e   \}.$$
If $M$ contains exactly two precolored edges, then exactly one precolored edge 
is in $C^1_{2n+1}$ or $C^2_{2n+1}$, say $C^1_{2n+1}$. 
Then at most one edge of $C^1_{2n+1}$ is adjacent to precolored edges of three distinct colors, so at most one edge $e$ satisfies that $|l(e)| = 1$, and all other edges 
of $C^1_{2n+1}$ have lists of size at least $2$. 
Hence by Lemma~\ref{Pathlemma}, there is a proper edge coloring $\varphi_1$ of $C^1_{2n+1}$ using colors from the lists. By coloring $C^2_{2n+1}$ correspondingly, and then coloring the uncolored edges of $M$, we obtain an extension of $\varphi$.

Suppose now that $M$ contains all the three precolored edges. If all the precolored edges have the same color, then $\varphi$ is trivially extendable. If, on the other
hand, at least two colors appear on the precolored edges, then
it follows from Lemma~\ref{claimcycle}, that $\varphi$ is extendable by first
coloring the edges of $M$.
\end{proof}
Note that even though any partial $3$-edge coloring of $C_{2n+1}$ is extendable,
the upper bound of three precolored edges
in Proposition~\ref{odd cycles} is best possible. For instance, consider 
a precoloring where two adjacent edges $e_1$ and $e_2$ of $C^1_{2n+1}$ are precolored 
$1$ and $2$, respectively, and the corresponding edges $e_1'$ and $e_2'$ of $C^2_{2n+1}$ are precolored $3$ and $4$, respectively (using the same notation as in the 
preceding proof).  \\

Next, we shall verify that Conjecture \ref{conj:general} holds for Class 1 graphs
of maximum degree three.

\begin{theorem}
\label{prop:deg3}
	If $G$ is a Class 1 graph with $\Delta(G)=3$ and every partial $3$-edge 
	coloring of 
	at most $k < 3$ edges in $G$ is extendable, then every partial 
	$4$-edge coloring of at most
	$k+1$ edges in $G \square K_2$ is extendable.
\end{theorem}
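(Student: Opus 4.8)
The plan is to imitate the strategy used for Theorems~\ref{compbip}, \ref{prop:complete} and \ref{th:main}. Let $M$ be the perfect matching of $G\square K_2$ with $G\square K_2-M$ equal to two copies $G_1$ and $G_2$ of $G$, and use $M$ also to denote the natural correspondence between $V(G_1)$ and $V(G_2)$ (and between $E(G_1)$ and $E(G_2)$). Since at most $k+1\le 3$ edges are precolored, we may assume $\varphi$ uses only colors from $\{1,2,3\}$. Two preliminary observations: first, $G\square K_2$ is Class~$1$ with $\chi'(G\square K_2)=4$ (take a proper $3$-edge coloring of $G_1$, copy it to $G_2$, color $M$ with $4$), so for $k=0$ any single precolored edge extends after permuting colors; second, if some copy of $G$ carries a precoloring of at most $k$ edges that uses only colors from $\{2,3,4\}$, then it extends to a proper $3$-edge coloring using only $\{2,3,4\}$ (relabel and apply the hypothesis), leaving color $1$ free at every vertex. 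I will call the latter the \emph{reservation trick}.

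\textbf{Case A: no precolored edge lies in $M$.} If both $G_1$ and $G_2$ contain a precolored edge, each contains at most $k$ of them, so the hypothesis extends $\varphi|_{G_1}$ and $\varphi|_{G_2}$ to proper $3$-edge colorings with $\{1,2,3\}$, and coloring every edge of $M$ by $4$ finishes this subcase. If all precolored edges lie in one copy, say $G_1$, I may assume (after relabeling) that some precolored edge is colored $1$; deleting color $1$ from all such edges leaves at most $k$ precolored edges using only $\{2,3\}$, which I extend with $\{2,3,4\}$ by the reservation trick. Restoring color $1$ on the deleted (pairwise nonadjacent) edges gives a proper $4$-edge coloring of $G_1$ extending $\varphi|_{G_1}$; I copy it to $G_2$ and color each edge of $M$ by a color missing at both endpoints, which exists because corresponding vertices have equal sets of missing colors and $M$ is a matching.

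\textbf{Case B: $M$ contains at least one precolored edge.} Let $E_M\subseteq M$ be the precolored edges of $M$, with $1\le |E_M|\le 3$, and write $e_i^*=u_iv_i$ with $u_i\in V(G_1)$, $v_i\in V(G_2)$. The idea is to pick a matching $M_1\subseteq E(G_1)$ of uncolored edges, exactly one incident to each $u_i$, no edge of $M_1$ adjacent to an edge precolored with the color of the $E_M$-edge it carries, and — when $G_2$ also carries precolored edges — a matching $M_2\subseteq E(G_2)$ with the analogous properties at the $v_i$. Coloring each edge of $M_1$ by the color of its $E_M$-edge and adding this to $\varphi|_{G_1}$ yields a precoloring of $G_1$ with at most $k+1$ independent edges; deleting color $1$ as in Case~A and applying the reservation trick extends it with $\{2,3,4\}$, after which I recolor the edges of $M_1$ to the colors prescribed by their $E_M$-edges when this is proper, and otherwise to color~$1$ (free everywhere), arranging in either case that each $u_i$ misses the color of $e_i^*$. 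I do the same for $G_2$, or, in the subcase where neither $G_1$ nor $G_2$ is precolored, simply copy the coloring of $G_1$ to $G_2$. Finally I color each $e_i^*$ by its prescribed color (legal at both endpoints by construction) and each remaining edge of $M$ by a color missing at both endpoints; as in the proof of Theorem~\ref{th:main}, one checks that the carrier matchings were chosen compatibly across $M$ (the ``other'' endpoints of $M_1$ and $M_2$ correspond under $M$) so that such a color always exists.

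\textbf{The main obstacle.} Compared with the regular case of Theorem~\ref{th:main}, the one genuinely new difficulty is the existence of the carrier matchings $M_1,M_2$ inside subcubic graphs: a greedy choice of an edge at each $u_i$ avoiding the forbidden colors can break down when two or three of the $u_i$ lie in a tight local configuration — most notably when they span a triangle sharing a common extra neighbour, or a $K_4$. I expect to dispose of this as follows: such a configuration either forces two of the $E_M$-edges to receive the same color under $\varphi$ (so one carrier edge can serve both corresponding $u_i$), or permits placing some carriers in $G_1$ and the rest in $G_2$, or else contradicts the assumption that every partial $3$-edge coloring of at most $k\ (\le 2)$ edges of $G$ extends — in particular $G$ has no $K_4$-component once $k=2$, since in $K_4$ the precoloring of two independent edges with two distinct colors is not completable to a proper $3$-edge coloring. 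Carrying out this finite case analysis, together with the routine properness checks in Cases~A and~B, is the bulk of the proof; the remaining structure parallels Theorems~\ref{compbip}, \ref{prop:complete} and \ref{th:main}.
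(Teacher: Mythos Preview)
Your overall plan parallels the paper's, but two genuine gaps remain. The paper's decisive structural step, proved \emph{before} any case analysis, is that the $k=2$ hypothesis forces $G$ to be triangle-free (unless $G=K_3$, handled via Proposition~\ref{odd cycles}): if $xyzx$ is a triangle and $d_G(x)=3$, then coloring $yz$ by $1$ and an edge at $x$ off the triangle by $2$ gives a $2$-edge precoloring with no proper $3$-edge extension, contradicting the hypothesis. You only gesture at this via $K_4$, but the full triangle-free conclusion is exactly what makes the carrier matchings $M_1, M_2$ available greedily; without isolating it up front your ``finite case analysis'' has no footing.

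Second, your recoloring step in Case~B is stated backwards. After extending with $\{2,3,4\}$ you say you ``recolor the edges of $M_1$ to the colors prescribed by their $E_M$-edges \dots\ arranging that each $u_i$ misses the color of $e_i^*$''; but putting $\varphi(e_i^*)$ on the carrier edge at $u_i$ makes that color \emph{present}, not missing, at $u_i$. The correct move (as in Theorem~\ref{th:main} and the paper's own argument) is the opposite: recolor each carrier edge to a \emph{spare} color so that $\varphi(e_i^*)$ vanishes from $u_i$, and then one must also check this creates no clash with restored $\varphi$-edges of color $1$. Even with both fixes the endgame is not routine: since $G$ is not regular, a vertex $u_i$ of degree $1$ or $2$ in $G$ may admit no suitable carrier edge at all, and when all three precolored edges lie in $M$ with three distinct colors the paper requires an explicit sub-case analysis on the neighbourhood of $\{u_1,v_1,w_1\}$ in $G_1$ that your sketch does not cover.
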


\begin{proof}
	The case when $k=1$ is trivial, so let us assume that $k =2$.
	Let $G_1$ and $G_2$ be copies of $G$ in $G \square K_2$ and $M$ the
	perfect matching joining vertices of $G_1$ with corresponding vertices of $G_2$.

	Consider a partial $3$-edge coloring $\varphi$ of $G \square K_2$ with
	three precolored edges.
	Note that we may assume that $G$ is connected, since otherwise we just
	consider every component of $G$ separately. Next, we prove that
	$G$ contains no triangle.
	
	If $G$ is a triangle, then we can apply Proposition 
	\ref{odd cycles}, so we may  assume that this is not the case.
	Thus if $G$ contains a triangle $xyzx$, then at least one vertex,
	say $x$ has degree $3$. Then we can color an edge incident with $x$ not on $xyzx$
	by the color $2$, and $yz$ by the color $1$. The resulting partial edge coloring
	is not extendable to a proper $3$-edge coloring, contradicting that
	any partial edge coloring with $2$ precolored edges is extendable.
	Thus $G$ is triangle-free.
	
	In the remaining part of the  proof of Theorem \ref{prop:deg3} 
	we shall consider a large number of
	different cases. The arguments use many ideas that are similar to ones that
	have been used before. Thus, we choose to sketch the arguments, rather than giving
	all the details, when they use ideas that have been employed in the preceding proofs.
	
	\bigskip
	{\bf Case 1} {\it All precolored edges are in $G_1$ and/or $G_2$:}
	
	This case can be dealt with as in previous proofs, so we omit the details.
	
\bigskip
	{\bf Case 2.} {\it $M$ contains exactly one precolored edge:}
	
	If the color of the precolored edge $e$ of $M$ only appears on $e$ under
	$\varphi$, then the result is trivial.
	
	Suppose that $e=u_1u_2$ is a precolored edge of $M$, 
	and, without loss of generality,
	that $\varphi(e)=1$, where $u_i \in V(G_i)$, and that at least one other
	edge is precolored $1$ under $\varphi$.
	
	If the remaining two precolored edges are in $G_1$, then
	we consider the precoloring $\varphi'$ of $G_1$ obtained from the restriction
	of $\varphi$ to $G_1$ by removing color $1$ from all edges precolored $1$
	in $G_1$. Then $\varphi'$ is extendable to a proper $3$-edge coloring
	using colors $2,3,4$. Next, we recolor the edges precolored $1$
	by the color $1$, color $G_2$ correspondingly, and color every edge
	of $M$ by a color missing at its
	endpoints to obtain an extension of $\varphi$.
	
	Suppose now that both $G_1$ and $G_2$ contains precolored edges.
	If $d_{G_1}(u_1) =3$, then $d_{G_2}(u_2) =3$, and since $G-M$ contains at most
	two precolored edges and $G$ is triangle-free, there 
	are uncolored corresponding edges $e_1 \in E(G_1)$ 
	and $e_2 \in E(G_2)$,
	adjacent to $e$, such that neither $e_1$, nor $e_2$,
	is adjacent to an edge of $G-M$ precolored $1$.
	We define a new precoloring $\varphi'$ from the restricton of $\varphi$
	to $G-M$ by coloring $e_1$ and $e_2$ by the color $1$.
	The restrictions of $\varphi'$ to $G_1$ and $G_2$, respectively,
	are extendable to proper $3$-edge colorings
	of $G_1$ and $G_2$, respectively. 
	Next we color $e_1$ and $e_2$ by the color $4$,
	$e$ by the color $1$, and every uncolored edge in $M$ 
	by a color in $\{1,2,3,4\}$ missing at its endpoints.

	Now assume that $d_{G_1}(u_1) =2$. If
	there is an uncolored edge $e_1 \in E(G_1)$ adjacent to
	$e$ such that neither $e_1$, nor the corresponding edge $e_2 \in E(G_2)$,
	is adjacent to an edge of $G-M$ precolored $1$, then we proceed as in the
	preceding paragraph. 
	Otherwise, there are corresponding edges $e_1 \in E(G_1)$ and $e_2 \in E(G_2)$
	that are adjacent to $e$, and satisfying that 
	$e_1$ is precolored or adjacent to an edge precolored $1$, 
	and $e_2$ is neither precolored, 
	nor adjacent to a precolored edge of $G_2$;
	moreover, there are corresponding edges $e'_1 \in E(G_1)$ and $e'_2 \in E(G_2)$ 
	adjacent to $e$, that
	satisfy analogous
	conditions with the roles of $G_1$ and $G_2$ interchanged.
	Hence, we may color $e'_1$ and $e_2$ by colors from $\{2,3\}$ to obtain extendable
	partial proper colorings of $G_1$ and $G_2$ from the restriction of $\varphi$ to
	$G_1$ and $G_2$, respectively. Note that in extensions of these colorings no edge
	colored $1$ is adjacent to $e$. Hence, $\varphi$ is extendable.
	A similar argument applies when $d_{G_1}(u_1) = 1$.

	\bigskip
	{\bf Case 3.} {\it $M$ contains exactly two precolored edges:}
	
	We assume that $G_1$ contains the third precolored edge $e_3$.
	If all precolored edges of $M$ have the same color, then we
	proceed as in Case 2 when $E(G_1) \cup M$ contains all precolored edges.
	Thus, we assume that two different colors $1$ and $2$ appear on the precolored
	edges of $M$; let $u_1$ and $v_1$ be the endpoints of these edges in $G_1$,
	respectively, where $u_1$ is incident with an edge of $M$ precolored $1$.
	
	Suppose first that $e_3$ is colored by some color appearing on $M$, say $1$.
	If there is an uncolored edge $e'$ incident with $u_1$ that is neither incident with
	$v_1$, nor adjacent to $e_3$, then we color $e'$ by the color $1$ and take
	an extension of the coloring of $e'$ and $e_3$ using colors $1,3,4$. We now
	proceed as before to obtain an extension of $\varphi$.
	Otherwise, if there is no such edge $e'$, then
	\begin{itemize}
	
		\item[(a)] $d_{G_1}(u_1) =1$ and $u_1$ is adjacent to an endpoint of $e_3$, or
		
		\item[(b)] $d_{G_1}(u_1)=1$ and $u_1$ and $v_1$ are adjacent, or
		
		\item[(c)] $d_{G_1}(u_1) =2$ and $u_1$ is adjacent both to $v_1$
		and an endpoint of $e_3$ (and these vertices are distinct).
	
	\end{itemize}
	If (a) holds, then we may simply take an extension of the restriction of $\varphi$
	to $G_1$ using colors $1,3,4$; 
	if (b) or (c) holds, then we color $u_1v_1$ by color $3$ and take an extension 
	of the obtained coloring
	of $G_1$ using colors $1,3,4$. In all cases, it is straightforward that $\varphi$
	is extendable.
	
	Suppose now that $\varphi(e_3)=3$. If there is no uncolored edge
	incident with $u_1$ or $v_1$, then the result is trivial. Otherwise,
	if there is such an edge, then we proceed as in the preceding paragraph.

	\bigskip
	{\bf Case 4.} {\it $M$ contains exactly three precolored edges:}
	
	If all three precolored edges of $M$ have the same color, then the
	result is trivial.
	
	Suppose now that two colors appear on the precolored edges in $M$, say $1$ and $2$,
	and there is a color, say $2$, that only appears on one edge.
	Denote this edge by $e=u_1u_2$, where $u_1 \in V(G_1)$.
	If $d_{G_1}(u_1) \leq 2$, then properly color the edges incident
	with $u_1$ by $3$ and $4$. The resulting precoloring is extendable
	to a proper edge coloring of $G_1$ using colors $2,3,4$. We obtain
	an extension of $\varphi$ by coloring $G_2$ correspondingly, coloring
	$u_1u_2$ by the color $2$ and all other edges of $M$ by the color $1$.
	If $d_{G_1}(u_1) =3$, then there is an edge $u_1x$ of $G_1$ that is only
	adjacent to one precolored edge of $M$. We define a precoloring
	of $G_1$ by coloring $u_1x$ by the color $2$. This precoloring is extendable
	to a precoloring of $G_1$ by colors $2,3,4$. We now obtain an extension
	of $\varphi$ by recoloring $u_1 x$ and proceeding as before.
	
	Suppose now that three colors appear on the edges of $M$, i.e.,
	that $u_1u_2$ is colored $1$, $v_1v_2$ is colored $2$, and $w_1w_2$
	is colored $3$, where $u_i,v_i,w_i \in V(G_i)$.
	
	If there exist two distinct vertices 
	$x,y \in V(G_1) \setminus \{u_1,v_1,w_1\}$ such that
	$x$ and $y$ can be matched to distinct vertices in 
	$\{u_1,v_1, w_1\}$ by two independent
	edges, say $u_1x$ and $v_1y$, then we color these edges 
	by $1$ and $2$ respectively
	and take an extension of this precoloring of $G_1$ using colors $1,2,4$.
	Next, we recolor both $u_1x$ and $v_1y$ by the color $3$,
	color $u_1u_2$ by the color $1$, $v_1v_2$ by the color $2$ and all other
	edges of $M$ by the color $3$; this yields an extension of $\varphi$.
	Otherwise, if no such edges
	exist, then $\{u_1, v_1, w_1\}$ has at most two neighbors outside
	$\{u_1, v_1, w_1\}$ in $G_1$. Moreover, since $G_1$ is connected, there
	must be at least one neighbor of 
	$\{u_1, v_1, w_1\}$ in $V(G_1) \setminus \{u_1,v_1,w_1\}$
	in $G_1$.

	Suppose first that $\{u_1, v_1, w_1\}$ has exactly one neighbor 
	$x \notin \{u_1, v_1, w_1\}$ in $G_1$. 
	If all vertices in $\{u_1, v_1, w_1\}$ are adjacent to $x$, then
	we color $xu_1$ by $1$ and $xv_1$ by color $4$. By assumption, this 
	coloring of $G_1$ is extendable to a proper edge coloring of $G_1$
	using colors $1,2,4$. Since $G$ is triangle-free, it follows that
	$\varphi$ is extendable.
	
	If only one vertex in $\{u_1, v_1, w_1\}$ is adjacent to $x$, say $u_1$,
	then either both $v_1$ and $w_1$ are adjacent to $u_1$,
	or both $u_1$ and $w_1$ are adjacent to $v_1$. In the first case
	we color $u_1v_1$ by $4$ and $u_1w_1$ by $2$; in the latter case
	we color $v_1w_1$ by $4$ and $u_1v_1$ by $3$. Both these partial
	colorings of $G_1$ are extendable to proper edge colorings of $G_1$
	using colors $2,3,4$. Hence, $\varphi$ is extendable.
	
	Suppose now that two vertices in $\{u_1, v_1, w_1\}$
	are adjacent to $x$, say $u_1$ and $v_1$.
	If $w_1$ is adjacent to both $u_1$ and $v_1$, then we color $xv_1$ by 
	the color $1$, and $u_1w_1$ by the color $2$. This precoloring of $G_1$
	is extendable (using colors $1,2,4$), and so, $\varphi$ is extendable.
	If $w_1$ is only adjacent to one of $u_1$ and $v_1$, then we may
	proceed similarly.

	Let us finally consider the case when
	$\{u_1, v_1, w_1\}$ has exactly two neighbors $x,y \notin \{u_1, v_1, w_1\}$
	in $G_1$.
	Then $x$ and $y$ has only neighbor in $\{u_1, v_1, w_1\}$, say $u_1$. Thus
	$v_1$ and $w_1$ are only adjacent to vertices in $\{u_1, v_1, w_1\}$.
	Since $G$ is triangle-free, we can 
	properly color the edges incident with $w_1$ and $v_1$ by
	two colors from $\{2,3,4\}$
	so that no vertex is incident with two edges of the same color. 
	By assumption, there
	is an extension of the obtained coloring of $G_1$ using colors $2,3,4$. 
	Hence, $\varphi$
	is extendable. This completes the proof of the theorem.
\end{proof}

	Unfortunately, we are not able to prove a corresponding result for 
	Class 2 graphs with maximum degree $3$, since we cannot handle 
	the presence of more precolored edges using our method.
	Nevertheless, we note that since $\Delta(L(G))\leq 4$ if $G$ is $3$-regular,
	where $L(G)$ denotes the line graph of $G$,
	it follows from
	the characterization of non-degree-choosable graphs\footnote{
	A graph $G$ is {\em degree-choosable} if it has an 
	$L$-coloring whenever $L$ is a list assignment such that $|L(v)| \geq d_G(v)$
	for all $v \in V(G)$.} proved in \cite{Borodin, ERT}	that one can 
	decide in polynomial time whether
	a given partial $4$-edge coloring of a graph with maximum degree $3$ 
	is extendable to a proper $4$-edge coloring.
	In particular, any partial coloring with at most three precolored 
	edges of a subcubic Class 2 graph is extendable
	(while a precoloring of four edges is obviously not always possible to extend).
	Thus, subcubic Class 2 graphs constitutes another large family of graphs 
	which admits an Evans-type result.
	So while there are well-known examples of subcubic Class 1 graphs that
	do not admit an Evans-type result (see e.g. \cite{CasselgrenMarkstromPham}),
	such examples do not exist for Class 2 graphs.
	
	Furthermore, let us note that the condition on degree here is best 
	possible since there are $4$-regular Class 2 graphs
	where not every partial coloring of at most $4$ edges is extendable 
	to a proper $5$-edge coloring \cite{AndersenHilton2}; indeed
	$K_5$ is such a graph.

\section*{Acknowledgements}
Petros and Fufa thank the International Science Program in Uppsala, Sweden, for financial support.

Casselgren was supported by a grant from the Swedish Research council VR
(2017-05077).

\end{document}